\newtheorem{theorem}{Theorem}[section]
\newtheorem{lemma}[theorem]{Lemma}
\newtheorem{definition}{Definition}[section]
\newtheorem{corollary}[theorem]{Corollary}
\DeclareSymbolFont{yhlargesymbols}{OMX}{yhex}{m}{n}
\DeclareMathAccent{\overarc}{\mathord}{yhlargesymbols}{"F3}
\def\moverlay{\mathpalette\mov@rlay}
\def\mov@rlay#1#2{\leavevmode\vtop{%
    \baselineskip\z@skip \lineskiplimit-\maxdimen
    \ialign{\hfil$\m@th#1##$\hfil\cr#2\crcr}}}
\newcommand{\charfusion}[3][\mathord]{
  #1{\ifx#1\mathop\vphantom{#2}\fi
    \mathpalette\mov@rlay{#2\cr#3}
  }
  \ifx#1\mathop\expandafter\displaylimits\fi}
\DeclareRobustCommand\bigop[1]{%
  \mathop{\vphantom{\sum}\mathpalette\bigop@{#1}}\slimits@
}
\newcommand{\bigop@}[2]{%
  \vcenter{%
    \sbox\z@{$#1\sum$}%
    \hbox{\resizebox{\ifx#1\displaystyle.9\fi\dimexpr\ht\z@+\dp\z@}{!}{$\m@th#2$}}%
  }%
}
\newcommand{\bigjoin}{\bigop{\triangledown}}
\DeclareMathOperator{\join}{\triangledown}
\newcommand{\cupdot}{\charfusion[\mathbin]{\cup}{\cdot}}
\DeclareMathOperator{\bigcupdot}{\charfusion[\mathop]{\bigcup}{\cdot}}
\definecolor{jade}{rgb}{0.0, 0.66, 0.42}
\newcommand{\child}{\mathsf{child}}
\DeclareMathOperator*{\argmax}{arg\,max}
\newcommand{\Pmax}{\mathrm{Pmax}}
\newcommand{\T}{\widetilde{T}}
\renewcommand{\t}{\widetilde{t}}
\newcommand{\AX}[1]{\textnormal{#1}}
\providecommand{\keywords}[1]{\textbf{\textit{Keywords: }} #1}
\title{Hierarchical and Modularly-Minimal  Vertex Colorings}
\author[1]{Dulce I. Valdivia}
\author[2,3]{Manuela Gei{\ss}}
\author[4]{Maribel Hern{\'a}ndez Rosales}
\author[2,5-9]{Peter F. Stadler}    
\author[10,*]{Marc Hellmuth} 
\affil[1]{Departamento de Ingenier{\'i}a Gen{\'e}tica, Centro de Investigaci{\'o}n y de Estudios Avanzados del IPN (CINVESTAV), 
Km. 9.6 Libramiento Norte Carretera Irapuato-Le{\'o}n, MX-36821, Irapuato, GTO, M{\'e}xico}
\affil[2]{Bioinformatics Group, Department of Computer Science \&
  Interdisciplinary Center for Bioinformatics, Universit{\"a}t Leipzig,
  H{\"a}rtelstra{\ss}e 16-18, D-04107 Leipzig, Germany}
\affil[3]{Software Competence Center Hagenberg GmbH, Softwarepark 21, A-4232 Hagenberg, Austria}
\affil[4]{CONACYT-Instituto de Matem{\'a}ticas, UNAM Juriquilla, Blvd.\
  Juriquilla 3001, 76230 Juriquilla, Quer{\'e}taro, QRO, M{\'e}xico}
\affil[5]{German Centre for Integrative Biodiversity Research
  (iDiv) Halle-Jena-Leipzig, Competence Center for Scalable Data Services
  and Solutions Dresden-Leipzig, Leipzig Research Center for Civilization
  Diseases, and Centre for Biotechnology and Biomedicine at Leipzig
  University at Universit{\"a}t Leipzig}
\affil[6]{Max Planck Institute for Mathematics in the Sciences,
  Inselstra{\ss}e 22, D-04103 Leipzig, Germany} 
\affil[7]{Institute for Theoretical Chemistry, University of Vienna,
  W{\"a}hringerstrasse 17, A-1090 Wien, Austria}
\affil[8]{Facultad de Ciencias, Universidad National de Colombia, Sede
  Bogot{\'a}, Colombia}
\affil[9]{Santa Fe Insitute, 1399 Hyde Park Rd., Santa Fe NM 87501,
  USA}
\affil[10]{School of Computing, University of Leeds, EC Stoner
  Building, Leeds LS2 9JT, UK}
\affil[*]{corresponding author, email \texttt{mhellmuth@mailbox.org}}
\date{\ }
\begin{document}

\maketitle

\abstract{ 
  Cographs are exactly the hereditarily well-colored graphs, i.e., the graphs for which a greedy vertex coloring of every induced subgraph uses only the minimally necessary number of colors $\chi(G)$. We show that greedy colorings are a special case of the more general hierarchical vertex colorings, which recently were introduced in phylogenetic combinatorics. Replacing cotrees by modular decomposition trees generalizes the concept of hierarchical colorings to arbitrary graphs. We show that every graph has a modularly-minimal coloring $\sigma$ satisfying $|\sigma(M)|=\chi(M)$ for every strong module $M$ of $G$. This, in particular, shows that modularly-minimal colorings provide a useful device to design efficient coloring algorithms for certain hereditary graph classes.  For cographs, the hierarchical colorings coincide with the modularly-minimal coloring.  As a by-product, we obtain a simple linear-time algorithm to compute a modularly-minimal coloring of $P_4$-sparse graphs.
}

\keywords{  proper vertex coloring; Grundy number; cographs; modular
  decomposition; chromatic number; $P_4$-sparse}
%
\section{Introduction}

Graph coloring problems appear as a natural formalization of diverse
real-life applications, describing in essence a partitioning of objects
into classes under a given set of constraints \cite{Jensen:95,Lewis:16}.
In this contribution, we investigate a specific type of vertex coloring
that naturally appear in computational biology. A detailed knowledge of the
evolutionary history of genes or species
\cite{DBH-survey05} is a prerequiste to answering many research questions
in biology.  In brief, the genome of an organism can be thought of as a
collection of genes. All organisms that belong to the same species share
the same collection of genes. Throughout evolution, species evolve
independently of each other and occasionally subdivide to form new
species. During this process of species-level evolution, also the genes
within a species' genomes change, and are sometimes lost or
duplicated. Since only those genes residing in species that are alive at
the present time can be observed and analyzed, the true evolutionary
history cannot be observed directly and hence must be inferred, using
algorithmic and statistical methods, from the genomic data available
today. A question of considerable practical importance is to decide whether
a pair of genes $x$ in species $A$ and $y$ in species $B$ are orthologs,
i.e., originated in a speciation event, or paralogs, i.e., were produced by
a gene duplication between speciation events
\cite{Fitch:70,TKL:97,GK13}. Since the true history is unknown, orthologous
gene pairs have to be distinguished from paralogs pairs using sequence
similarity as a measure of evolutionary relatedness.  A large class of
methods to determine orthology starts from so-called pairwise best hits
$\{x,y\}$, that is, of all genes in species $A$, the gene $x$ is most
similar to $y$, and of all genes in $B$, $y$ is most similar to $x$
\cite{Altenhoff:16}. This defines a graph $G$ on the set of genes. A
coloring $\sigma$ then assigns to each gene the species in which it
resides. A key result of \cite{Geiss:19b} is that if the edges of $G$
correctly represent orthology, then $(G,\sigma)$ is a so-called
hierarchically-colored cograph (a restricted types of colorings in graphs
that do not contain induced paths on four vertices).  The requirement of an
hierarchical coloring substantially strengthens the previously known
necessary condition that $G$ must be a cograph \cite{Hellmuth:13a}.

In this contribution we first study the properties of hierarchical
colorings and their relationship with greedy colorings of cographs. In
particular, we show that a coloring of a cograph is a greedy coloring if
and only if it is hierarchical w.r.t.\ all of its binary cotrees. On the
other hand, a coloring is minimal on each intermediate step along a binary
cotree if and only if it is hierarchical w.r.t.\ the same binary cotree.
These results motivate concepts of hierarchical and modularly-minimal
colorings of arbitrary graphs that are defined in terms of the modular
decomposition. As a main result we show that every graph $G$ has a
modularly-minimal coloring $\sigma$, that is, the subgraph $G[X]$ induced
by any strong module $X$ of $G$ is minimally colored.  As a by-product, we
obtain a simple linear-time algorithm to compute a
modularly-minimal coloring of $P_4$-sparse graphs in polynomial
time.

\section{Cographs and their Hierarchical Colorings}

Let $G=(V,E)$ be an undirected graph. A (proper vertex) coloring of $G$ is
a surjective map $\sigma: V\to S$ such that $xy\in E$ implies
$\sigma(x)\ne \sigma(y)$. We will often refer to such coloring as an
$|S|$-coloring.  The minimum number $|S|$ of colors such that there is an
$|S|$-coloring of $G$ is known as the \emph{chromatic number} $\chi(G)$.
For a subset $W\subseteq V$ (resp., a subgraph $H$ of $G$) we denote with
$\sigma(W)$ (resp., $\sigma(H)$) the set of colors assigned to the vertices
in $W$ (resp.\ $V(H)$) using $\sigma$.

A \emph{greedy coloring} of $G$ is obtained by ordering the set of colors
and coloring the vertices of $G$ in a random order with the first available
color. The \emph{Grundy number} $\gamma(G)$ is the maximum number of colors
required in a greedy coloring of $G$ \cite{Christen:79}. Obviously
$\gamma(G)\ge\chi(G)$. Determining $\chi(G)$ \cite{Karp:72} and $\gamma(G)$
\cite{Zaker:06} for arbitrary graphs are NP-complete problems.  A graph $G$
is called \emph{well-colored} if $\chi(G)=\gamma(G)$ \cite{Zaker:06}. It is
\emph{hereditarily well-colorable} if every induced subgraph is
well-colorable. For later reference, we provide the following useful 
\begin{lemma}\label{lem:CC-greedy}
  Let $\sigma$ be a greedy coloring of a disconnected graph $G$ with
  connected components $G_1,\dots, G_k$, $k\geq 2$ and let
  $H=\bigcupdot_{i\in I} G_i$ for some nonempty subset
  $I\subseteq\{1,\dots,k\}$. Then the restriction of $\sigma$ to $H$ is a
  greedy coloring of $H$.
\end{lemma}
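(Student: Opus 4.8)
The plan is to exploit the operational definition of a greedy coloring together with the single structural feature that distinguishes a disconnected graph: every vertex sees only neighbors inside its own connected component. Concretely, for any vertex $v\in V(G_i)$ one has $N_G(v)\subseteq V(G_i)$, where $N_G(v)$ denotes the neighborhood of $v$ in $G$; hence if $i\in I$ then all neighbors of $v$ already lie in $V(H)$. Since $H=\bigcupdot_{i\in I} G_i$ is an induced subgraph of $G$, this gives the neighborhood identity $N_H(v)=N_G(v)$ for every $v\in V(H)$. This is the only fact about the component structure that I expect to need.

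Since $\sigma$ is a greedy coloring of $G$, there is, by definition, a linear order $\prec$ on $V(G)$ such that processing the vertices in the order $\prec$ and assigning to each vertex the smallest color not yet used on its already-colored neighbors reproduces $\sigma$. First I would restrict $\prec$ to $V(H)$, obtaining a linear order $\prec_H$ on the vertices of $H$. The claim is then that running the greedy procedure on $H$ with respect to $\prec_H$ again reproduces the restriction of $\sigma$ to $H$.

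I would prove this by induction along $\prec_H$. Assume that every vertex preceding $v$ in $\prec_H$ has already received its original color under $\sigma$. When $v\in V(H)$ is processed, the colors forbidden for it are exactly the colors of its neighbors that precede it. Because $N_G(v)=N_H(v)\subseteq V(H)$, these neighbors are precisely the neighbors of $v$ that precede it in the full order $\prec$, and by the induction hypothesis they carry the same colors in both runs. Hence the set of forbidden colors, and therefore the smallest available color, is identical in the two procedures, so $v$ receives the color $\sigma(v)$ in $H$ as well. This shows that the restriction of $\sigma$ to $V(H)$ is a greedy coloring of $H$.

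The step requiring the most care is the bookkeeping around the color set rather than any combinatorial difficulty: one must check that the restricted map is a genuine greedy coloring with respect to the retained order of the colors, i.e., that no color is silently ``skipped''. This is automatic from the construction, since any coloring produced by the greedy rule necessarily uses a contiguous initial segment of the color order (if a color $j$ below some used color were never assigned, the vertex of that higher color could never have had $j$ forbidden). Thus the only genuine content of the lemma is the neighborhood identity $N_G(v)=N_H(v)$ for $v\in V(H)$, and everything else reduces to a routine induction on the greedy process.
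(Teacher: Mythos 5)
Your proof is correct and takes essentially the same approach as the paper's: both restrict the greedy vertex order of $G$ to the subgraph and observe that, since every vertex's neighbors lie entirely within its own connected component, the set of forbidden colors---and hence the first available color---is identical in $G$ and in $H$. Your explicit induction along $\prec_H$ (and the closing remark that greedy colorings never skip colors) merely spells out in detail what the paper dismisses as ``easy to verify.''
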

\begin{proof}
  W.l.o.g.\ let the color set $\{1,\dots,|\sigma(G)|\}$ be naturally
  ordered from small to large integers.  Since $\sigma$ is a greedy
  coloring it necessarily colors every connected component $G_i$ with
  colors $\{1,\dots,|\sigma(G_i)|\}$.  Moreover, let us preserve the
  ordering $\prec_i$ on the vertices in each $G_i$ according to the order
  they are visited during the greedy coloring in $G$.  It is easy to verify
  that the first available color in $G$ to color a vertex $x$ in $G_i$ is
  precisely the first available to color $x$ when using the greedy coloring
  w.r.t. $\prec_i$ in $G_i$ only.  In other words, the greedy coloring can
  be applied independently on the connected components, which completes the
  proof.
\end{proof}

\begin{definition}[\cite{Corneil:81}]
  \label{def:Corneil}
  A graph $G$ is a \emph{cograph} if $G=K_1$, $G$ is the disjoint union
  $G=\bigcupdot_i G_i$ of cographs $G_i$, or $G$ is a join
  $G=\bigjoin_i G_i$ of cographs $G_i$.
\end{definition}
Since both operations, $\bigcupdot_i$ and $\bigjoin_i$, are commutative and
associative, each cograph can be written as the the join or disjoint union
of two cographs.  This recursive construction induces a rooted binary tree
$T$, whose leaves are individual vertices corresponding to a $K_1$ and
whose interior vertices correspond to the union and join operations. We
write $L(T)$ for the leaf set and $V^0(T)\coloneqq V(T)\setminus L(T)$ for
the set of inner vertices of $T$. The set of children of $u$ is denoted by
$\child(u)$. For edges $e=uv$ in $T$ we adopt the convention that $v$ is a
child of $u$. We define a labeling function $t:V^0(T)\rightarrow\{0,1\}$,
where an interior vertex $u$ of $T$ is labeled $t(u)=0$ if it is associated
with a disjoint union, and $t(u)=1$ for joins.  We will refer to $(T,t)$ as
a \emph{cotree}.  The tree $T(u)$ denotes the subtree of $T$ that is rooted
at $u$. To simplify the notation we will write $G(u):=G[L(T(u))]$ for the
subgraph of $G$ induced by the vertices in $L(T(u))$. Note that $G(u)$ is
the graph consisting of the single vertex $u$ if $u$ is a leaf of
$T$. Furthermore, $G(u)$ is a cograph by definition.

Given a cograph $G$, there is a unique \emph{discriminating}
cotree\footnote{In \cite{Corneil:81} the discriminating cotree is defined
  as \emph{the} cotree associated with $G$. Here we call every tree $(T,t)$
  a cotree as it is always a ``refinement'' of some discriminating cotree
  that explain the same cograph \cite{Boecker:98}.}  $(T^*,t^*)$ in which
adjacent operations are distinct, i.e., $t^*(u)\neq t^*(v)$ for all
interior edges $uv\in E(T)$. The discriminating cotree $(T^*,t^*)$ is
obtained from every arbitrary binary cotree $(T,t)$ by contracting all
edges $uv$ with $t(u)=t(v)$ into a single vertex $\overarc{uv}$ with label
$t^*(\overarc{uv})=t(u)=t(v)$.  Conversely, every binary cotree of $G$ can
be obtained by replacing an inner vertex of $u$ and its children
$u_1,\dots, u_k$ by an arbitrary binary tree with root $u$, leaves
$u_1,\dots, u_k$ and all its inner vertices $w$ labeled by $t(w)=t^*(u)$,
see e.g.\ \cite{Boecker:98,Corneil:81} for details.

It is possible to refine the discriminating cotree by subdividing a
disjoint union or join into multiple disjoint unions or joins, respectively
\cite{Boecker:98,Corneil:81}. It is well known that every induced subgraph
of a cograph is again a cograph. A graph is a cograph if and only if it
does not contain a path $P_4$ on four vertices as an induced subgraph
\cite{Corneil:81}.  The cographs are also exactly the hereditarily
well-colored graphs \cite{Christen:79}. The chromatic number of a cograph
$G$ can be computed recursively, as observed in
\cite[Tab.1]{Corneil:81}. Starting from $\chi(K_1)=1$ as base case we have
\begin{equation}
  \begin{split}
    \chi(G) &= \chi\left(\bigcupdot_{i} G_i\right) = \max_{i} \chi(G_i)
    \textrm{ or }\\
    \chi(G) &= \chi\left(\bigjoin_{i} G_i\right) = \sum_{i} \chi(G_i)
  \end{split}
  \label{eq:rec1}
\end{equation}

\emph{Hierarchically colored cographs} (\emph{hc-cographs}) were introduced
in \cite{Geiss:19b} as the undirected colored graphs recursively defined by
 \begin{description}
 \item[\AX{(K1)}] $(G,\sigma)=(K_1,\sigma)$, i.e., a colored vertex, or
 \item[\AX{(K2)}] $(G,\sigma)= (H_1,\sigma_{H_1}) \join (H_2,\sigma_{H_2})$
   and $\sigma(V(H_1))\cap \sigma(V(H_2))=\emptyset$, or
 \item[\AX{(K3)}]
   $(G,\sigma)= (H_1,\sigma_{H_1}) \cupdot (H_2,\sigma_{H_2})$ and 
   $\sigma(V(H_1))\cap \sigma(V(H_2)) \in
   \{\sigma(V(H_1)),\sigma(V(H_2))\}$,
\end{description}
where $\sigma(x) = \sigma_{H_i}(x)$ for every $x\in V(H_i)$, $i\in\{1,2\}$
and $(H_1,\sigma_{H_1})$ and $(H_2,\sigma_{H_2})$ are hc-cographs.

Obviously, the graph $G$ underlying an hc-cograph is a cograph. Thus, the
recursive construction of an hc-cograph $G$ implies a binary cotree $(T,t)$
that can be constructed with a top down approach as follows: Denote the
root of $(T,t)$ by $r$. It is associated with the graph $G(r)=G$. In the
general step we consider an induced subgraph $G(u)$ of $G$ associated with
a vertex $u$ of $T$. If $G(u)$ is connected, then $t(u)=1$ and $G(u)$ is
the joint of pair of induced subgraphs $G(v_1)$ and $G(v_2)$. To identify
these graphs, consider the connected components
$\overline{G_1},\dots,\overline{G_k}$ of the complement $\overline{G(u)}$
of $G(u)$. We have
\begin{equation}
  G(u) = \overline{\bigcupdot_{i=1}^k \overline{G_i}} =
  \bigjoin_{i=1}^k \overline{\overline{G_i}} = 
  \bigjoin_{i=1}^k G_i \,.
\end{equation}
We therefore set $G(v_1)=G_1$ and
$G(v_2)=\overline{\bigcupdot_{i=2}^k \overline{G_i}}=\bigjoin_{i=2}^k
G_i$. By construction, we therefore have $G(u)=G(v_1)\join G(v_2)$ with
disjoint color sets $\sigma(G(v_1))$ and $\sigma(G(v_2))$. 
If $G(u)$ is disconnected, define
$t(u)=0$, identify one of the components, say $G_1$, with the smallest
numbers of colors $|\sigma(G_1)|$ and set $G(v_1)=G_1$ and
$G(v_2)=G(u)\setminus G(v_1)$. The fact that $G(u)$ is an hc-cograph
ensures that $\sigma(G(v_1))\subseteq \sigma(G(v_2))$. In both the
connected and the disconnected case we attach $v_1$ and $v_2$ as the
children of $u$ in $T$. The reconstruction of $(T,t)$ can be performed in
linear time.  

\begin{definition}\label{def:hc-coloring}
  A coloring $\sigma$ of a cograph $G$ is a \emph{hierarchical coloring
    w.r.t.\ the binary cotree $(T,t)$} if $(T,t)$ is a cotree of $G$ and
  $(G,\sigma)$ is a hc-cograph recursively constructed according to
  $(T,t)$.
\end{definition}
As noticed in \cite{Geiss:19b}, a coloring $\sigma$ of a cograph $G$ may be
hierarchical w.r.t.\ a binary cotree $(T,t)$ but not hierarchical w.r.t.\
another binary cotree $(T',t')$ that yields the same cograph. An example
is shown in Fig.\ \ref{Fig:diffcotree}.

\begin{figure}[h]
  \begin{center}
    \includegraphics[scale=0.55]{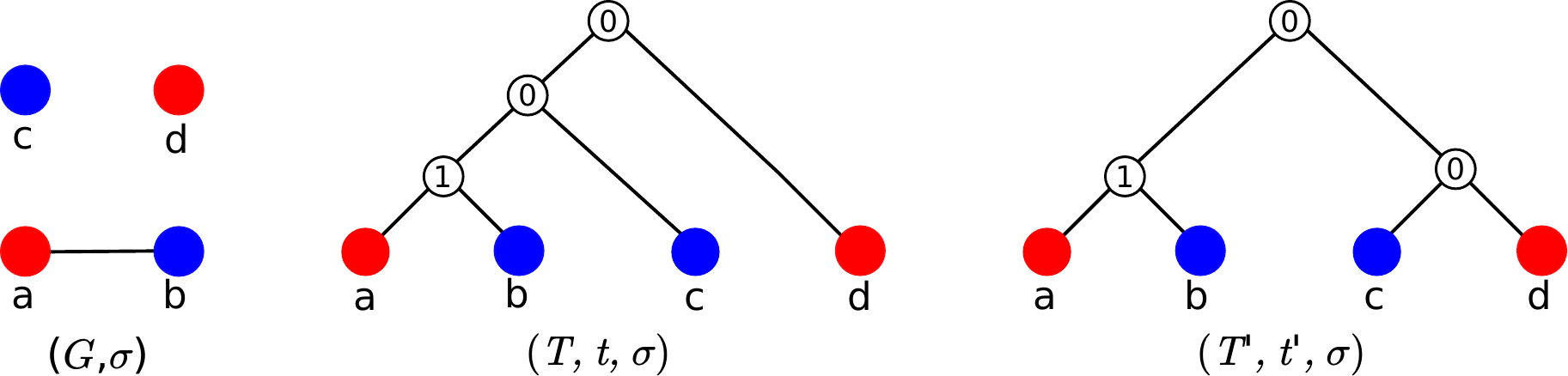}		
  \end{center}
  \caption{The induced cotree of a cograph $G$ affects the hierarchical
    coloring property of $\sigma$, where
    $\sigma(a)=\sigma(d)\neq\sigma(b)=\sigma(c)$. In $(T,t)$, the first
    tree from left to right, \AX{(K1)}-\AX{(K3)} are satisfied making
    $\sigma$ hierarchical.  However the second tree $(T',t')$ does not
    satisfy \AX{(K3)} since the parent node of $c\simeq K_1$ and
    $d\simeq K_1$ corresponds to a disjoint union operation and
    $\sigma(c)\cap\sigma(d)=\emptyset$. Thus $\sigma$ is not hierarchical
    w.r.t. $(T',t')$.}
  \label{Fig:diffcotree}
\end{figure}

Every hierarchical coloring of a cograph is also a proper coloring (cf.\
\cite[Lemma 43]{Geiss:19b}).  The property of being a cograph is hereditary
(i.e., every induced subgraph of a cograph is a cograph). However, this is
not necessarily true for hc-cographs. As an example consider the induced
disconnected subgraph with vertices $c$ and $d$ of the hc-cograph in Fig.\
\ref{Fig:diffcotree} that violates $\AX{(K3)}$. Nevertheless, if $G$ is an
hc-cograph, then each of its connected components must be an hc-cograph
(cf.\ \cite[Lemma 44]{Geiss:19b}).  We show now that hc-cographs are always
optimally colored.
\begin{theorem} 
  Let $\sigma$ be a hierarchical coloring of a cograph $G$ w.r.t.\ some
  binary cotree $(T,t)$. Then $|\sigma(V)|=\chi(G)$.
  \label{thm:hc-implies-chrom}
\end{theorem}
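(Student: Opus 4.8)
The plan is to induct on the number of vertices $n=|V(G)|$, following the recursive structure of the binary cotree $(T,t)$. For the base case $G=K_1$ we have $\chi(G)=1=|\sigma(V)|$ by rule $\AX{(K1)}$. For the inductive step, let $r$ be the root of $(T,t)$ and let $v_1,v_2$ be its two children (the tree is binary, so there are exactly two), and write $H_i\coloneqq G(v_i)$ for $i\in\{1,2\}$. Since $(G,\sigma)$ is an hc-cograph constructed according to $(T,t)$, the restrictions $(H_i,\sigma_{H_i})$ are themselves hc-cographs constructed according to the subtrees $(T(v_i),t)$; hence the induction hypothesis applies and yields $|\sigma(V(H_i))|=\chi(H_i)$ for $i\in\{1,2\}$.

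First I would treat the join case. If $t(r)=1$, then $G=H_1\join H_2$ arose via $\AX{(K2)}$, so $\sigma(V(H_1))\cap\sigma(V(H_2))=\emptyset$. Counting colors across the disjoint union of color sets gives $|\sigma(V)|=|\sigma(V(H_1))|+|\sigma(V(H_2))|=\chi(H_1)+\chi(H_2)$, which by the join case of \eqref{eq:rec1} equals $\chi(G)$. This case is immediate from disjointness.

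Next I would treat the disjoint-union case. If $t(r)=0$, then $G=H_1\cupdot H_2$ arose via $\AX{(K3)}$, so one of the two color sets is contained in the other; without loss of generality assume $\sigma(V(H_1))\subseteq\sigma(V(H_2))$. Then $|\sigma(V)|=|\sigma(V(H_1))\cup\sigma(V(H_2))|=|\sigma(V(H_2))|=\chi(H_2)$. The induction hypothesis now turns the structural containment into the numerical inequality $\chi(H_1)=|\sigma(V(H_1))|\le|\sigma(V(H_2))|=\chi(H_2)$, so the disjoint-union case of \eqref{eq:rec1} gives $\chi(G)=\max\{\chi(H_1),\chi(H_2)\}=\chi(H_2)=|\sigma(V)|$, completing the induction.

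I expect the disjoint-union step to be the main (though modest) obstacle: there the whole argument hinges on the fact that the containment of color sets enforced by $\AX{(K3)}$ translates, via the induction hypothesis, into the correct inequality between the chromatic numbers, so that the maximum in \eqref{eq:rec1} is attained precisely by the component carrying the larger color set. No further properties of $\sigma$ beyond those guaranteed by the hc-cograph rules $\AX{(K1)}$--$\AX{(K3)}$ are required; in particular, properness of $\sigma$ is already known and is not needed for the count.
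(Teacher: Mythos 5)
Your proof is correct and follows essentially the same route as the paper's: induction on $|V|$ along the cotree, using the disjointness of color sets in the join case to get $\chi(H_1)+\chi(H_2)$ and \AX{(K3)} in the disjoint-union case to get $\max\{\chi(H_1),\chi(H_2)\}$, matching Equation~\eqref{eq:rec1} in both cases. The only cosmetic difference is that you spell out the containment argument in the union case slightly more explicitly than the paper does.
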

\begin{proof}
  We proceed by induction w.r.t.\ $|V|$. The statement is trivially true
  for $|V|=1$, i.e. $G=K_1$, since $\chi(K_1)=1$. Now suppose
  $|V|>1$. Thus, $G=G_1\join G_2$ or $G=G_1 \cupdot G_2$ for some
  hc-cographs $G_1=(V_1,E_1)$ and $G_1=(V_2,E_2)$ with
  $1\leq|V_1|,|V_2|<|V|$. By induction hypothesis we have
  $|\sigma(V_1)|=\chi(G_1)$ and $|\sigma(V_2)|=\chi(G_2)$.

  First consider $G=G_1\join G_2$. Since $xy\in E(G)$ for all $x\in V_1$
  and $y\in V_2$ we have $\sigma(x)\ne\sigma(y)$, and hence
  $\sigma(V_1)\cap\sigma(V_2)=\emptyset$. Thus,
  $\sigma(V)=\sigma(V_1)\cupdot\sigma(V_2)$ and therefore,
  \begin{equation*}
    |\sigma(V)| = |\sigma(V_1)| + |\sigma(V_2)| \stackrel{\text{IH.}}{=}
    \chi(G_1)+\chi(G_2) \stackrel{\text{Equ.\ }\eqref{eq:rec1}}{=} \chi(G).
  \end{equation*}
  We note that the coloring condition in \AX{(K2)} therefore only enforces
  that $\sigma$ is a proper vertex coloring.

  Now suppose $G=G_1\cupdot G_2$. Axiom \AX{(K3)} implies
  $|\sigma(V)|=\max( |\sigma(V_1)|,|\sigma(V_2)|)$. Hence,
  \begin{equation*}          
    |\sigma(V)| = \max( |\sigma(V_1)|,|\sigma(V_2)|)
    \stackrel{\text{IH.}}{=}
    \max(\chi(G_1),\chi(G_2)) \stackrel{\text{Equ.\ }\eqref{eq:rec1}}{=}
    \chi(G).
  \end{equation*}
\end{proof}

Since the connected components of hc-cographs are again hc-cographs, the
following statement is an immediate consequence of the recursive
construction of hc-cographs:
\begin{corollary}
  \label{cor:chi(G(u))}
  If $\sigma$ is a hierarchical coloring of $G$ w.r.t.\ the binary
  cotree $(T,t)$, then $|\sigma(G(u))|=\chi(G(u))$ for all nodes $u$ of
  $(T,t)$ and $|\sigma(G')|=\chi(G')$ for all connected components $G'$ of
  $G$.
\end{corollary}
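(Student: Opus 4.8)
The plan is to reduce both equalities to Theorem~\ref{thm:hc-implies-chrom}. For each of the subgraphs $G(u)$ and $G'$ it suffices to produce a binary cotree with respect to which the corresponding restriction of $\sigma$ is a hierarchical coloring; the theorem then immediately supplies $|\sigma(G(u))|=\chi(G(u))$ and $|\sigma(G')|=\chi(G')$, respectively. These two cotrees are obtained in slightly different ways, so I would treat the two assertions in turn.

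For the first assertion, fix a node $u$ of $(T,t)$ and pass to the subtree $T(u)$ equipped with the restriction of $t$ to its interior nodes. Since $G(u)=G[L(T(u))]$ is a cograph whose vertices are exactly the leaves of $T(u)$, the pair $(T(u),t|_{T(u)})$ is a binary cotree of $G(u)$. The key step is to observe that restricting the top-down recursive construction of $(G,\sigma)$ to the subtree rooted at $u$ is precisely the recursive construction of $(G(u),\sigma|_{G(u)})$: at every interior node $w$ of $T(u)$ one still has $G(w)=G(w_1)\join G(w_2)$ or $G(w)=G(w_1)\cupdot G(w_2)$ with unchanged color sets, so that \AX{(K1)}--\AX{(K3)} hold verbatim along $T(u)$. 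Hence $(G(u),\sigma|_{G(u)})$ is an hc-cograph recursively constructed according to $(T(u),t|_{T(u)})$, i.e.\ $\sigma|_{G(u)}$ is a hierarchical coloring of $G(u)$ by Definition~\ref{def:hc-coloring}, and Theorem~\ref{thm:hc-implies-chrom} gives the claim.

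For the second assertion, let $G'$ be a connected component of $G$. As $\sigma$ is a hierarchical coloring, $(G,\sigma)$ is an hc-cograph, and since every connected component of an hc-cograph is again an hc-cograph (\cite[Lemma~44]{Geiss:19b}), the colored graph $(G',\sigma|_{G'})$ is an hc-cograph. By Definition~\ref{def:hc-coloring} this means $\sigma|_{G'}$ is a hierarchical coloring of $G'$ with respect to some binary cotree, so Theorem~\ref{thm:hc-implies-chrom} again yields $|\sigma(G')|=\chi(G')$. Alternatively one may note that every component of $G$ in fact equals $G(u)$ for a suitable node $u$ (descending through the union nodes of $(T,t)$), which would make the second assertion a special case of the first. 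Either way, the only point demanding genuine care---and hence the closest thing to an obstacle in an otherwise immediate argument---is the verification in the first assertion that the hc-cograph axioms are inherited by the subtree $T(u)$; this is routine but is the one place where the recursive structure must actually be unwound rather than merely cited.
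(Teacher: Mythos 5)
Your proposal is correct and follows essentially the same route as the paper: the corollary is stated there as an immediate consequence of the recursive construction of hc-cographs (which is exactly your subtree-restriction argument for the nodes $u$) together with the fact that connected components of hc-cographs are again hc-cographs (\cite[Lemma~44]{Geiss:19b}), both feeding into Theorem~\ref{thm:hc-implies-chrom}. Your added observation that every connected component equals $G(u)$ for some node $u$ is a valid shortcut the paper does not spell out, but it does not change the substance of the argument.
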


As detailed in \cite{Christen:79}, we have $\chi(G)=\gamma(G)$ for
cographs. Thus, it seems natural to ask whether every greedy coloring is
hierarchical w.r.t.\ some binary cotree $(T,t)$. Making use of the fact
that $\chi(G)=\gamma(G)$, we assume w.l.o.g.\ that the color set is
$S=\{1,2,\dots,\chi(G)\}$ whenever we consider greedy colorings of a
cograph.

We shall say that a cograph $G$ is a \emph{minimal counterexample for some
  property $\mathcal{P}$} if (1) $G$ does not satisfy $\mathcal{P}$ and (2)
every induced subgraph of $G$ (i.e., every ``smaller'' cograph) satisfies
$\mathcal{P}$.

\begin{lemma}
  \label{lem:grehc}
  Let $G$ be a cograph, $(T,t)$ an arbitrary binary cotree for $G$ and
  $\sigma$ a greedy coloring of $G$. Then $\sigma$ is a hierarchical
  coloring w.r.t.\ $(T,t)$.
\end{lemma}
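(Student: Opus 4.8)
The plan is to induct on $|V(G)|$ and to verify, at the root of $(T,t)$, the single recursive condition that distinguishes an hc-cograph, then pass to the two subtrees via the induction hypothesis. Write $r$ for the root of $(T,t)$ and let $v_1,v_2$ be its two children, so that $G=G(v_1)\join G(v_2)$ if $t(r)=1$ and $G=G(v_1)\cupdot G(v_2)$ if $t(r)=0$; the base case $|V|=1$ is \AX{(K1)}. Since $\sigma$ is a proper coloring and, in a join, every vertex of $G(v_1)$ is adjacent to every vertex of $G(v_2)$, the color sets $\sigma(G(v_1))$ and $\sigma(G(v_2))$ are automatically disjoint, so \AX{(K2)} comes for free. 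Hence the whole argument reduces to two things: (i) establishing the \AX{(K3)}-nesting at a union root, and (ii) showing that the restriction of $\sigma$ to each child subgraph is again a greedy coloring (possibly after an innocuous relabeling of colors), so that the induction hypothesis applies to $(G(v_i),\sigma|_{G(v_i)})$ together with the binary cotree $T(v_i)$.

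For the union case I would use the initial-segment property of greedy colorings already recorded inside the proof of Lemma~\ref{lem:CC-greedy}: each connected component $C$ of a greedily colored graph receives exactly the colors $\{1,\dots,|\sigma(C)|\}$, because a vertex colored $c$ must have neighbors --- necessarily in its own component --- realizing every color $1,\dots,c-1$. When $t(r)=0$ the components of $G$ split between $G(v_1)$ and $G(v_2)$, so each $\sigma(G(v_i))$ is a union of initial segments and hence itself an initial segment $\{1,\dots,|\sigma(G(v_i))|\}$. Two initial segments are always nested, which is precisely \AX{(K3)}. Moreover Lemma~\ref{lem:CC-greedy} tells us directly that $\sigma|_{G(v_i)}$ is a greedy coloring of $G(v_i)$, since each $G(v_i)$ is a union of components of $G$, so the induction hypothesis applies unchanged.

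The join case contains the real work: the claim that the restriction of a greedy coloring of $G(v_1)\join G(v_2)$ to one factor is greedy after relabeling its colors to an initial segment. Fixing the greedy order $\prec$ and writing $A=\sigma(G(v_1))$ with $f\colon A\to\{1,\dots,|A|\}$ the order-preserving bijection, I would show $f\circ\sigma|_{G(v_1)}$ is the greedy coloring of $G(v_1)$ for the induced order $\prec|_{G(v_1)}$. The key point is that for $v\in G(v_1)$ with $\sigma(v)=c$, every color $a\in A$ with $a<c$ is realized by a $\prec$-earlier neighbor of $v$ inside $G(v_1)$: greediness forces $a$ onto some earlier neighbor of $v$, and that neighbor cannot lie in $G(v_2)$ since $\sigma(G(v_2))$ is disjoint from $A\ni a$. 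Consequently the colors forbidden for $v$ within $G(v_1)$ are, after applying $f$, exactly an initial block $\{1,\dots,k\}$ with $k=|\{a\in A:a<c\}|$, while $f(c)=k+1$ is free because no neighbor of $v$ shares its color; thus the greedy rule on $G(v_1)$ reproduces $f\sigma(v)$. Since the hc-cograph conditions \AX{(K2)}/\AX{(K3)} constrain color sets only through equality, disjointness, and containment, they are invariant under the bijection $f$, so ``$f\circ\sigma|_{G(v_1)}$ hierarchical'' and ``$\sigma|_{G(v_1)}$ hierarchical'' are equivalent; the induction hypothesis applied to the smaller cograph $G(v_1)$ with cotree $T(v_1)$ then finishes this child, and symmetrically the other.

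Assembling the pieces, at the root we have verified \AX{(K2)} (join) or \AX{(K3)} (union) and, by induction, both $(G(v_i),\sigma|_{G(v_i)})$ are hc-cographs built along $T(v_i)$; by the recursive definition this exhibits $(G,\sigma)$ as an hc-cograph constructed according to $(T,t)$, i.e.\ $\sigma$ is hierarchical w.r.t.\ $(T,t)$. I expect the only genuinely delicate step to be the join-restriction claim: restriction of a greedy coloring to an arbitrary induced subgraph is \emph{not} greedy in general, and it is the interplay between the disjointness of the join color sets and the ``skipping'' induced by the relabeling $f$ that makes it work here --- this is where I would spend the most care.
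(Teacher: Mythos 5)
Your proof is correct, and its skeleton matches the paper's: induction over the binary cotree (the paper phrases it as a minimal-counterexample argument, which is the same thing in contrapositive form), \AX{(K2)} coming for free at a join root from properness, and the initial-segment/nesting argument together with Lemma~\ref{lem:CC-greedy} at a union root. The genuine difference is your treatment of the join case, and it is a substantive one. The paper's proof, in the connected case, asserts that since $\sigma$ is not hierarchical it ``must fail to be hierarchical on one of the subgraphs $G_i$,'' contradicting minimality --- but minimality only constrains \emph{greedy} colorings of smaller cographs, so this step tacitly requires that the restriction of $\sigma$ to a join factor is again greedy. Taken verbatim that is false (the color set of a join factor is generally not an initial segment of $\{1,\dots,\chi(G)\}$), and the paper never addresses the point. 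Your relabeling lemma --- every color $a\in A=\sigma(G(v_1))$ with $a<\sigma(v)$ must appear on a $\prec$-earlier neighbor of $v$ that lies \emph{inside} $G(v_1)$ because disjointness of the join color sets excludes $G(v_2)$, hence $f\circ\sigma|_{G(v_1)}$ is greedy for the induced vertex order, and the conditions \AX{(K1)}--\AX{(K3)} are invariant under color bijections --- is exactly what is needed to make the inductive step legitimate, so your write-up supplies a proof of a claim the paper uses implicitly. One small quibble: the colors forbidden for $v$ within $G(v_1)$ need not be \emph{exactly} $\{1,\dots,k\}$ after applying $f$, since earlier neighbors may also carry colors mapping above $k+1$; what your argument actually shows, and all that the greedy rule requires, is that the forbidden set contains $\{1,\dots,k\}$ and misses $k+1$.
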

\begin{proof}
  Assume $G$ is a minimal counterexample, i.e., $G$ is a minimal cograph
  for which a coloring $\sigma$ exists that is a greedy coloring but not an
  hierarchical coloring. If $G$ is connected, then either $G\simeq K_1$ or
  $G=\bigjoin_{i=1}^n G_i$ and $\sigma(V)=\bigcupdot_{i=1}^n \sigma(V_i)$
  for some $n>1$, i.e., \AX{(K1)} or \AX{(K2)} is satisfied.  By
  assumption, $\sigma$ is not a hierarchical coloring, hence $\sigma$ must
  fail to be a hierarchical coloring on at least one of the subgraphs
  $G_i$, contradicting the assumption that $G$ is a minimal
  counterexample. Thus, $G$ cannot be connected.

  Therefore, assume that $G=\bigcupdot_{i=1}^n G_i$ for some $n>1$. Since
  $G$ is represented by a binary cotree $(T,t)$, the root of $T$ must have
  exactly two children $u$ and $v$. Hence, we can write
  $G=G(u)\cupdot G(v)$.  Since $G$ is a minimal counterexample and since,
  by Lemma \ref{lem:CC-greedy}, $\sigma$ restricted to $G(u)$, resp.,
  $G(v)$ is a greedy coloring of $G(u)$, resp., $G(v)$, we can conclude
  that $\sigma$ induces a hierarchical coloring on $G(u)$ and $G(v)$.
  However, since $\sigma$ is, in particular, a greedy coloring of $G(u)$
  and $G(v)$, $\sigma(V(G(u)))\subseteq \sigma(V(G(v)))$ or
  $\sigma(V(G(v)))\subseteq \sigma(V(G(u)))$ most hold. But this
  immediately implies that $(G,\sigma)$ satisfies $\AX{(K3)}$ and thus
  $\sigma$ is a hierarchical coloring of $G$.  Therefore, $G$ is not a
  minimal counterexample, which completes the proof.
\end{proof}

Since every cograph is well-colorable we obtain as an immediate consequence
\begin{corollary}\label{cor:allCog}
  Every cograph has a hierarchical coloring w.r.t.\ each of its binary
  cotrees $(T,t)$.
\end{corollary}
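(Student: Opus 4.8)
The plan is to read off the statement directly from Lemma~\ref{lem:grehc}, combined with the trivial fact that a greedy coloring of $G$ always exists. So first I would fix an arbitrary cograph $G$ together with an arbitrary binary cotree $(T,t)$ of $G$, and then exhibit one concrete coloring that is hierarchical w.r.t.\ $(T,t)$.

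To produce such a coloring, I would recall that every finite graph admits at least one greedy coloring: fix any linear order on $V$ and color the vertices one after another, always using the smallest color not yet assigned to an already-colored neighbor. Applying this to $G$ yields a greedy coloring $\sigma$. Since cographs are well-colorable, i.e.\ $\chi(G)=\gamma(G)$, every greedy coloring of $G$ in fact uses exactly $\chi(G)$ colors, so we may adopt the convention that its color set is $\{1,\dots,\chi(G)\}$, as fixed above; this is precisely the setting under which Lemma~\ref{lem:grehc} was established.

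By Lemma~\ref{lem:grehc}, this $\sigma$ is a hierarchical coloring w.r.t.\ $(T,t)$. As $(T,t)$ was an arbitrary binary cotree of $G$, it follows that $G$ has a hierarchical coloring w.r.t.\ each of its binary cotrees, which is the claim. In fact, since the construction of $\sigma$ does not reference the cotree at all, a single greedy coloring simultaneously witnesses the hierarchical property for \emph{all} binary cotrees of $G$.

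I expect no genuine obstacle here, since the entire weight of the argument is already carried by Lemma~\ref{lem:grehc}, and the only extra ingredient---existence of a greedy coloring---is immediate. The one point worth making explicit is the role of well-colorability: it guarantees that the witnessing greedy coloring is optimal, so that by Theorem~\ref{thm:hc-implies-chrom} (and Corollary~\ref{cor:chi(G(u))}) the resulting hierarchical coloring uses precisely $\chi(G)$ colors and is minimal on every $G(u)$.
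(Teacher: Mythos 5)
Your proposal is correct and follows essentially the same route as the paper: the paper derives this corollary as an immediate consequence of Lemma~\ref{lem:grehc} together with the well-colorability of cographs, exactly as you do (including the normalization of the color set to $\{1,\dots,\chi(G)\}$). Your closing observation that a single greedy coloring witnesses the property for \emph{all} binary cotrees simultaneously is a nice explicit remark, but it is already implicit in the paper's argument since Lemma~\ref{lem:grehc} quantifies over all cotrees for a fixed greedy coloring.
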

The converse of Lemma \ref{lem:grehc} is not true.  Fig.\
\ref{Fig:hcnogrundy} shows an example of a hierarchical coloring that is
not a greedy coloring.

\begin{figure} 
  \begin{tabular}{ccc}
    \begin{minipage}{0.3\textwidth}
      \includegraphics[width=\textwidth]{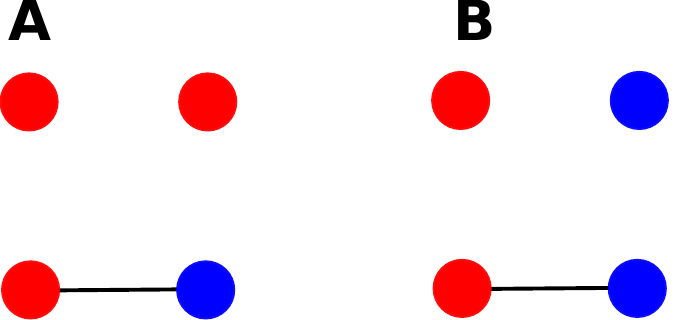}
    \end{minipage} &&
    \begin{minipage}{0.6\textwidth}
      \caption{Both colorings \textsf{A} and \textsf{B} of
        $K_2\cupdot K_1\cupdot K_1$ are hierarchical colorings w.r.t.\
        some cotrees. To see this, note that the coloring \textsf{A} is
        produced by $(K_1\join K_1)\cupdot (K_1\cupdot K_1)$, while
        coloring \textsf{B} is the result of
        $((K_1\join K_1)\cupdot K_1)\cupdot K_1$.  Only \textsf{A} is a
        greedy coloring. The coloring \textsf{B} uses different colors for
        the two single-vertex components and thus is not greedy.}
    \end{minipage} 
  \end{tabular}
  \label{Fig:hcnogrundy}
\end{figure}

\begin{theorem}
  A coloring $\sigma$ of a cograph $G$ is a greedy coloring if and only if
  it is a hierarchical coloring w.r.t.\ every binary cotree $(T,t)$ of $G$.
  \label{thm:greedy}
\end{theorem}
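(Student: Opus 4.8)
The plan is to prove the two implications separately. The forward direction --- a greedy coloring is hierarchical w.r.t.\ \emph{every} binary cotree --- is already in hand: Lemma~\ref{lem:grehc} shows that a greedy coloring of $G$ is hierarchical w.r.t.\ an \emph{arbitrary} binary cotree $(T,t)$, and since that argument does not single out any particular $(T,t)$, it covers all of them simultaneously. So all the work is in the converse, and there I would argue by minimal counterexample, in close parallel to Lemma~\ref{lem:grehc}. Fix the palette as $\{1,\dots,\chi(G)\}$. Suppose $G$ is a smallest cograph carrying a coloring $\sigma$ that is hierarchical w.r.t.\ every binary cotree but fails to be greedy. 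Since $K_1$ is trivially greedy we have $|V|>1$, so $G=\bigjoin_i G_i$ or $G=\bigcupdot_i G_i$; fixing any binary cotree, its root has two children $u,v$ with $G=G(u)\join G(v)$ or $G=G(u)\cupdot G(v)$.

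The first reduction is that $\sigma$ restricts to colorings of $G(u)$ and of $G(v)$ that are again hierarchical w.r.t.\ every binary cotree of the respective part. This holds because any binary cotree of $G(u)$ and any binary cotree of $G(v)$ glue, under a root of the correct type, into a binary cotree of $G$; hierarchicity of $\sigma$ on the glued cotree restricts, through the recursive definition of hc-cographs, to hierarchicity on each part. By minimality of $G$ this forces $\sigma|_{G(u)}$ and $\sigma|_{G(v)}$ to be greedy.

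In the join case $t(\mathrm{root})=1$ the two color sets are disjoint, since a join is complete between the parts and $\sigma$ is proper. I would merge a greedy color order of $G(u)$ with one of $G(v)$ into any single order on the (disjoint) union of the two palettes. Every vertex of $G(u)$ then sees all colors of $G(v)$ through the join edges and all smaller colors of its own part through the within-part greedy property, and symmetrically for $G(v)$; hence $\sigma$ is greedy on $G$, contradicting the choice of $G$. So the root cannot be a join.

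The disjoint-union case $t(\mathrm{root})=0$ is where I expect the genuine difficulty. Here axiom~\AX{(K3)} gives nested color sets, say $\sigma(G(u))\subseteq\sigma(G(v))$, and Corollary~\ref{cor:chi(G(u))} together with the reasoning of Lemma~\ref{lem:CC-greedy} shows that a greedy coloring of each part uses an initial segment of the ordered palette, so the smaller palette sits as a prefix of the larger. The plan is to use this to assemble one global color order under which $\sigma$ is simultaneously greedy on both components, hence on $G$ (there being no edges across the union). The delicate point --- and the step I expect to be the main obstacle --- is precisely that a \emph{single} order must realize the greedy property on \emph{both} parts at once while honoring the nesting of \AX{(K3)}; this is exactly the kind of compatibility that fails for a coloring like \textsf{B} in Fig.~\ref{Fig:hcnogrundy}, where two components receive incomparable color sets. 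The crux of the proof is therefore to show that hierarchicity with respect to \emph{every} cotree (rather than merely some cotree) is what excludes such incompatibilities and lets the prefixes be lined up consistently. Once the global order is produced, $\sigma$ is greedy, $G$ is not a counterexample, and the induction closes.
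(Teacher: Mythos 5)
Your forward direction is indeed already settled by the paper's Lemma~\ref{lem:grehc}: since that lemma is proved for an arbitrary binary cotree, it covers all cotrees simultaneously, exactly as you say. The problem is the converse, where your text stops being a proof precisely at the point you yourself call the crux. For the disjoint-union case you establish only that $\sigma$ restricted to $G(u)$ and $G(v)$ is greedy and that $\sigma(G(u))\subseteq\sigma(G(v))$ via \AX{(K3)} at one root, and then \emph{announce} that hierarchicity w.r.t.\ every cotree ``excludes the incompatibilities'' between the two per-part color orders --- but nothing is derived from that hypothesis. The paper's proof does something concretely different here: it does not reason from the root split of an arbitrary cotree, but \emph{constructs} one special cotree --- components $G_i$ are grouped by chromatic number, each group $G^r$ gets a cotree $(T^r,t^r)$, and the groups are hung on a caterpillar ordered by increasing $\chi$ --- and then applies \AX{(K3)} along that particular tree, together with Corollary~\ref{cor:chi(G(u))}, to conclude that components with equal chromatic number carry \emph{identical} color sets and that the group color sets form a chain under inclusion, from which a global color order is read off. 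Your reduction extracts strictly weaker information than this, so the gap is not a routine omission: the data you have at the point where your argument stops (per-part greediness plus one nesting) genuinely does not determine a common order.

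In fact the deferred step cannot be completed as planned. Let $C_1 = z \join \bigl(x \cupdot (y_1 \join y_2)\bigr)$ be a paw colored $\sigma(x)=\sigma(y_1)=1$, $\sigma(y_2)=2$, $\sigma(z)=3$, and let $C_2$ be a second copy with the colors $1$ and $2$ transposed. Each copy satisfies \AX{(K1)}--\AX{(K3)} at every node of its unique binary cotree, each copy is individually greedy (use the color order $1<2<3$ for $C_1$ and $2<1<3$ for $C_2$), and both copies use exactly the color set $\{1,2,3\}$, so \AX{(K3)} holds at the root of the unique binary cotree of $G=C_1\cupdot C_2$; hence $\sigma$ is hierarchical w.r.t.\ \emph{every} binary cotree of $G$. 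Yet $\sigma$ is not greedy: the pendant vertex of $C_1$ is colored $1$ and its sole neighbor is colored $3$, forcing $1$ to precede $2$ in any admissible color order, while the pendant vertex of $C_2$ is colored $2$ with sole neighbor colored $3$, forcing $2$ to precede $1$. So per-component greediness together with nested --- here even identical --- color sets does not yield a common order, which is exactly the compatibility you flagged. Note that this example also satisfies everything the paper's caterpillar argument derives (equal $\chi$, equal color sets, trivial chain), so it equally defeats the final inference in the paper's own proof; the obstacle you identified but left open is not only a gap in your proposal, it marks a step that, as stated, cannot be carried out at all.
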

\begin{proof}
  By Lemma~\ref{lem:grehc}, every greedy coloring is a hierarchical
  coloring for every binary cotree $(T,t)$. Now suppose $\sigma$ is an
  hierarchical coloring for every binary cotree $(T,t)$ and let $G$ be a
  minimal cograph for which $\sigma$ is not a greedy coloring. As in the
  proof of Lemma \ref{lem:grehc}, we can argue that $G$ cannot be a minimal
  counterexample if $G$ is connected: in this case,
  $G=\bigjoin_{i=1}^n G_i$ and $\sigma(V)=\bigcupdot_{i=1}^n \sigma(V_i)$
  for all colorings, and thus $\sigma$ is a greedy coloring if and only if
  it is a greedy coloring with disjoint color sets for each $G_i$. Hence, a
  minimal counterexample must have at least two connected components.

  Let $G=\bigcupdot_{i=1}^n G_i$ for some $n>1$ and define a partition of
  $\{1,\dots,n\}$ into sets $I_1,\dots,I_{\ell}$, $\ell\geq 1$, such that
  for every $r\in \{1,\dots,\ell\}$ we have $i,j\in I_r$ if and only if
  $\chi(G_i) = \chi(G_j)$. Since every
  $G^r \coloneqq \bigcupdot_{i\in I_r} G_i$, $1\leq r\leq\ell$, is a
  cograph, each $G^r$ can be represented by a (not necessarily unique)
  binary cotree $(T^r,t^r)$.  Note, we have $\chi(G^r)=\chi(G_i)$ for all
  $i\in I_r$. Now, we can construct a binary cotree $(T,t)$ for $G$ as
  follows: let $T^*$ be a caterpillar with leaf set
  $L(T^*)=\{l_1,\dots,l_\ell\}$.  We choose
  $T^* = (\ldots((l_1,l_2),l_3), \ldots l_\ell)$ (in Newick format). Note,
  if $\ell=1$, then $T^*\simeq K_1$.  Now, the root of every tree
  $T^1,\dots,T^r$ is identified with a unique leaf in $L(T^*)$ such that
  the root of $T_i$ is identified with $l_i\in L(T^*)$ and the root of
  $T_j$ is identified with $l_j\in L(T^*)$, where $i<j$ if and only if
  $\chi(G^i)<\chi(G^j)$. This yields the tree $T$.  The labeling $t$ for
  $(T,t)$ is provided by keeping the labels of each $(T^r,t^r)$ and by
  labeling all other inner vertices of $T$ by $0$.  It is easy to see that
  $(T,t)$ is a binary cotree for $G$. By assumption, $\sigma$ is
  hierarchical w.r.t.\ $(T,t)$. We denote by $C(T^*)\subseteq V(T)$ the set
  of inner vertices of $T^*$. Since $\sigma$ is hierarchical w.r.t.\
  $(T,t)$ and thus in particular w.r.t.\ any subtree $(T^r,t^r)$, we have
  $\sigma(V(G_i))\cap \sigma(V(G_j))\in \{\sigma(V(G_i)),\sigma(V(G_j))\}$
  for any $i,j\in I_r$, $1\le r\le\ell$, by \AX{(K3)}. Hence, as
  $\chi(G^r)=\chi(G_i)=\chi(G_j)$, it must necessarily hold
  $\sigma(V(G_i))= \sigma(V(G_j))$ for all $i,j\in I_r$, i.e., all
  connected components $G_i$ with the same chromatic number are colored by
  the same color set. By construction, at every node $v\in C(T^*)$ of the
  caterpillar structure, with children $v'$ and $v''$, the components
  $G':=G(v')$ and $G'':=G(v'')$ satisfy $\chi(G')<\chi(G'')$.  Invoking
  \AX{(K3)} we therefore have $\sigma(G')\subset\sigma(G'')$ and
  $G^*:=G(v)$ is colored by the color set $\sigma(G^*)=\sigma(G'')$.  These
  set inclusions therefore imply a linear ordering of the colors such that
  colors in $\sigma(G')$ come before those in
  $\sigma(G'')\setminus \sigma(G')$. Thus $\sigma$ is a greedy coloring
  provided that the restriction of $\sigma$ to each of the connected
  components $G_i$ of $G$ is a greedy coloring, which is true due to the
  assumption that $G$ is a minimal counterexample. Thus no minimal
  counterexample exists, and the coloring $\sigma$ is indeed a greedy
  coloring of $G$.
\end{proof}

Not every minimal coloring of a cograph is hierarchical w.r.t.\ some binary
cotree. For instance, if $G = G_1\bigcupdot G_2$ is the disjoint union of
two connected graphs $G_1$ and $G_2$, then it suffices that
$\chi(G_1)<|\sigma(V_1)| = \chi(G)$. That is, we may use more colors than
necessary on $G_1$. Note that for every cotree $(T,t)$ of $G$ there is
  a vertex $u$ in $T$ such that $G_1=G(u)$.  Hence, for every cotree we
  have $\chi(G(u))<|\sigma(G(u))|$ with $G(u)=G_1$.  Contraposition of
Cor.\ \ref{cor:chi(G(u))} implies that $\sigma$ cannot be a hierarchical
coloring of $G$ w.r.t.\ any cotree $(T,t)$ of $G$.  In the following we
restrict our attention to those coloring that satisfy the necessary
conditions of Cor.\ \ref{cor:chi(G(u))}, which is specified in the next
 \begin{definition}
  Let $G$ be a cograph with cotree $(T,t)$. A coloring $\sigma$ is
  \emph{$(T,t)$-minimal} if $|\sigma(G(u))|=\chi(G(u))$ for every vertex
  $u$ of the cotree $(G,t)$.
\end{definition}

\begin{theorem}
  A coloring $\sigma$ of cograph $G$ is $(T,t)$-minimal for a binary cotree
  $(T,t)$ if and only it is hierarchical w.r.t.\ $(T,t)$.
  \label{thm:mrc}
\end{theorem}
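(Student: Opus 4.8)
The statement is an equivalence, and one implication is already in hand: if $\sigma$ is hierarchical w.r.t.\ $(T,t)$, then Corollary~\ref{cor:chi(G(u))} yields $|\sigma(G(u))|=\chi(G(u))$ for every node $u$ of $(T,t)$, which is exactly $(T,t)$-minimality. So the plan is to concentrate on the converse, showing that $(T,t)$-minimality forces the recursive hc-cograph construction to go through along $(T,t)$.

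For the converse I would induct on $|V(G)|$ (equivalently, on the size of the cotree). The base case $G=K_1$ satisfies \AX{(K1)} trivially. For the inductive step, let $u$ be the root of $(T,t)$ with children $v_1,v_2$, so $G=G(v_1)\join G(v_2)$ if $t(u)=1$ and $G=G(v_1)\cupdot G(v_2)$ if $t(u)=0$. A first routine observation is that $(T(v_i),t)$ is a binary cotree of $G(v_i)$ and that the restriction $\sigma|_{G(v_i)}$ is $(T(v_i),t)$-minimal: every node of $T(v_i)$ is also a node of $T$, so the equalities $|\sigma(G(w))|=\chi(G(w))$ are simply inherited. Hence the induction hypothesis applies to each child, and $(G(v_i),\sigma|_{G(v_i)})$ is an hc-cograph built according to $(T(v_i),t)$. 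It then remains only to verify that the correct axiom holds at the root $u$.

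The join case is immediate: since every vertex of $G(v_1)$ is adjacent to every vertex of $G(v_2)$ and $\sigma$ is a proper coloring, $\sigma(G(v_1))\cap\sigma(G(v_2))=\emptyset$, which is precisely \AX{(K2)}. The disjoint-union case is the heart of the argument and the one point I would flag as the main (albeit short) obstacle. There $\sigma(G(u))=\sigma(G(v_1))\cup\sigma(G(v_2))$, and $(T,t)$-minimality together with the recursion~\eqref{eq:rec1} gives
\[
  |\sigma(G(v_1))\cup\sigma(G(v_2))| = |\sigma(G(u))| = \chi(G(u)) = \max\bigl(\chi(G(v_1)),\chi(G(v_2))\bigr) = \max\bigl(|\sigma(G(v_1))|,|\sigma(G(v_2))|\bigr),
\]
where the final equality uses minimality at $v_1$ and $v_2$. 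Assuming w.l.o.g.\ that $|\sigma(G(v_2))|\ge|\sigma(G(v_1))|$, the union has the same (finite) cardinality as $\sigma(G(v_2))$, and since $\sigma(G(v_2))\subseteq\sigma(G(v_1))\cup\sigma(G(v_2))$ this forces $\sigma(G(v_1))\subseteq\sigma(G(v_2))$, i.e.\ \AX{(K3)}. Thus at every node the appropriate axiom is satisfied, so by Definition~\ref{def:hc-coloring} the pair $(G,\sigma)$ is an hc-cograph constructed according to $(T,t)$; hence $\sigma$ is hierarchical w.r.t.\ $(T,t)$, which completes the converse and the proof.
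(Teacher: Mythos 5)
Your proof is correct and takes essentially the same route as the paper: the forward implication is exactly Corollary~\ref{cor:chi(G(u))}, and your induction on $|V(G)|$ is the paper's minimal-counterexample argument in explicit inductive form, with the same case analysis at the root (properness forces disjoint color sets for the join, giving \AX{(K2)}; $(T,t)$-minimality together with Equ.~\eqref{eq:rec1} forces containment of the smaller color set in the larger for the disjoint union, giving \AX{(K3)}). If anything, your cardinality argument in the union case spells out a step the paper states only implicitly ($\chi(G)=\chi(G_1)$ implies $\sigma_2(V(G_2))\subseteq\sigma_1(V(G_1))$).
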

\begin{proof}
  By Corollary \ref{cor:chi(G(u))}, $\sigma$ is $(T,t)$-minimal if it is
  hierarchical w.r.t.\ $(T,t)$.

  Now suppose there is a minimal cograph $G$ with a coloring $\sigma$ that
  is $(T,t)$-minimal but not hierarchical w.r.t.\ $(T,t)$.  If $G$ is
  connected, then $G=\bigjoin_{i=1}^n G_i$ for some $n\ge 2$ and the
  restrictions of $\sigma$ to the connected components $G_i$ use disjoint
  color sets. Hence, $\sigma$ is a hierarchical coloring whenever the
  restriction to each $G_i$ is a hierarchical coloring.  Thus a minimal
  counterexample cannot be connected. Now suppose
  $G=\bigcupdot_{i=1}^n G_i$ for some $n\ge2$.  Since $(G,\sigma)$ is by
  assumption a minimal counterexample, each connected component
  $(G_i,\sigma_i)$ is a hierarchical cograph (w.r.t.\ subtrees of
  $(T,t)$). Since $(T,t)$ is binary, its root $u$ has two children $u_1$
  and $u_2$ that correspond to $G_1 = G(u_1)$ and $G_2 = G(u_2)$, resp.,
  such that $G=G_1\cupdot G_2$.  By Equ.\ \eqref{eq:rec1}, we can
    choose the notation such that $\chi(G)=\chi(G_1)$.  By definition,
  $\sigma$ induces a coloring $\sigma_1$ on $G_1$ and $\sigma_2$ on $G_2$.
  Clearly, each coloring $\sigma_i$ is $(T(u_i),t_{|T(u_i)})$-minimal.
  Since $G$ is a minimal counterexample, $\sigma_1$ and $\sigma_2$ are both
  hierarchical colorings of $G_1$ and $G_2$, respectively, in other words
  $(G_1,\sigma_1)$ and $(G_2,\sigma_2)$ are hc-cographs w.r.t.\ subtrees of
  $(T,t)$ that are rooted at $u_1$ and $u_2$, respectively.  Moreover,
  $\chi(G)=\chi(G_1)$ implies $\sigma_2(V(G_2))\subseteq
  \sigma_1(V(G_1))$. In summary, therefore,
  $(G,\sigma) = (G_1,\sigma_1) \cupdot (G_2,\sigma_2)$ satisfies \AX{(K3)},
  thus it is a cograph with a hierarchical coloring $\sigma$.  Hence, there
  cannot exist a minimal cograph with a coloring $\sigma$ that is
  $(T,t)$-minimal but not a hierarchical coloring w.r.t.\ $(T,t)$.
\end{proof}

Given a cograph $G$ and its corresponding binary cotree $(T,t)$ it is not
difficult to construct a $(T,t)$-minimal coloring.

\begin{algorithm}
  \caption{$(T,t)$-minimal coloring a cograph $G$ with binary 
    cotree $(T,t)$.}
  \label{alg:cBMG2}
  \algsetup{linenodelimiter=}
  \begin{algorithmic}[1]
    \REQUIRE Cograph $G$ and binary cotree $(T,t)$
    \STATE initialize coloring $\sigma$ s.t.\ all $v \in V(G)$ 
           have different colors
    \FORALL[from bottom to top where each $u$ is processed after 
            all its children have been processed]{$u\in V^{0}(T)$}
       \IF {$t(u) = 0$} 
          \STATE Let $v,w$ be the children of $u$
          \STATE $G^* \leftarrow \argmax\{\chi(G(v)),\chi(G(w))\}$
          \STATE $S \leftarrow \sigma(V(G^*))$ 
          \STATE $H\leftarrow G(v)$ if $G^*=G(w)$, otherwise $H\leftarrow G(w)$
          \STATE randomly choose an injective map $\phi:\sigma(H)\to S$ \label{alg:cBMG2:recol1}
          \FORALL {$x\in H$}
             \STATE $\sigma(x)\leftarrow \phi(\sigma(x))$   \label{alg:cBMG2:recol2}
          \ENDFOR
       \ENDIF
    \ENDFOR
  \end{algorithmic}
\end{algorithm}

\begin{theorem}\label{Thm:Algo1}
  Given a cograph $G$ and its corresponding binary cotree $(T,t)$, Alg.\
  \ref{alg:cBMG2} returns a $(T,t)$-minimal coloring $\sigma$ in polynomial
  time.
\end{theorem}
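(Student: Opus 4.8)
The plan is to prove the cardinality condition $|\sigma(G(u))|=\chi(G(u))$ for every node $u$ of $(T,t)$ by induction along the tree, exploiting that the algorithm processes the inner vertices $V^0(T)$ strictly bottom--up. Before entering the loop I would precompute $\chi(G(u))$ for all $u$ in a single bottom--up pass using the recursion \eqref{eq:rec1}; this both justifies the $\argmax$ step and already accounts for a linear share of the running time. The base case is immediate: every leaf $u$ has $G(u)=K_1$ with $\chi(G(u))=1$, and the initialization gives it a single, distinct color. Throughout, I would maintain two invariants: (i) $\sigma$ is a proper coloring of $G$, and (ii) at the moment a node $u$ has just been processed, $|\sigma(G(u))|=\chi(G(u))$. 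Since a $(T,t)$-minimal coloring is by definition a proper coloring meeting the cardinality bound at every node, establishing (i) and (ii) for the final $\sigma$ finishes the proof.

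For the inductive step fix an inner node $u$ with children $v,w$. The key preliminary observation is a \emph{disjointness invariant}: at the time $u$ is processed none of its ancestors has been touched yet, so the colors appearing on $G(v)$ and on $G(w)$ result from recolorings carried out strictly inside $T(v)$ and $T(w)$, respectively. Each such recoloring only remaps colors already present within the corresponding subtree, so $\sigma(G(v))$ and $\sigma(G(w))$ remain contained in the disjoint blocks of initial colors assigned to the leaves below $v$ and below $w$; hence they are disjoint. If $t(u)=1$ the algorithm does nothing, and combining this disjointness with the induction hypothesis and the join rule of \eqref{eq:rec1} yields $|\sigma(G(u))|=|\sigma(G(v))|+|\sigma(G(w))|=\chi(G(v))+\chi(G(w))=\chi(G(u))$. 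If $t(u)=0$, say $G^*=G(v)$ with $\chi(G(v))\ge\chi(G(w))$, then $|\sigma(G(w))|=\chi(G(w))\le\chi(G(v))=|S|$ guarantees that the injective map $\phi\colon\sigma(G(w))\to S$ of line~\ref{alg:cBMG2:recol1} exists; after recoloring $G(w)$ its color set lies inside $S=\sigma(V(G(v)))$, so $\sigma(G(u))=\sigma(G(v))$ and $|\sigma(G(u))|=\chi(G(v))=\max(\chi(G(v)),\chi(G(w)))=\chi(G(u))$ by \eqref{eq:rec1}.

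The step I expect to be the main obstacle is showing that invariants (i) and (ii), once established at $u$, \emph{survive} the later processing of the ancestors of $u$; this is what turns the per--node guarantees into a statement about the \emph{final} coloring. The crucial point is that every recoloring performed at a disjoint--union ancestor $z$ applies a single injective map to the colors of an entire child subtree of $z$, and this subtree contains $G(u)$ as a whole. An injective relabeling neither changes the number of distinct colors on $G(u)$, preserving (ii), nor merges two color classes, so it preserves properness within that subtree; and because $z$ is a disjoint union it creates no edges between its two sides. For global properness I would note that every edge $xy\in E(G)$ is a ``join edge'' at $u\coloneqq\mathrm{lca}(x,y)$, where $x$ and $y$ lie in the two distinct child subtrees and hence receive colors from disjoint sets, so $\sigma(x)\ne\sigma(y)$ holds when $u$ is processed; every higher ancestor keeps both $x$ and $y$ inside one of its child subtrees and thus remaps them together by the same injective map, keeping their colors distinct. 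Finally, the running time is polynomial: the precomputation of all $\chi(G(u))$ is linear, and each of the $O(|V|)$ disjoint--union nodes recolors at most $|V|$ vertices and builds $\phi$ in $O(|V|)$ time, giving an overall $O(|V|^2)$ bound.
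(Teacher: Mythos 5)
Your proof is correct and follows essentially the same route as the paper's: a bottom-up induction over the cotree in which the two children's color sets are disjoint at the moment a node is processed, the join case is then immediate, and the union case uses an injective recoloring of the child with smaller chromatic number into the other child's color set. The only difference is one of rigor: you explicitly verify that the per-node color counts and global properness survive the later recolorings at ancestors (since each such recoloring applies a single injective map to an entire child subtree containing $G(u)$), a persistence argument the paper's proof leaves implicit.
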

\begin{proof} 
  We need to show that the Algorithm constructs a $(T(u),t)$-minimal
  coloring for every vertex $u$ of $T$. For the leaves this is trivial.  We
  claim that Alg.\ \ref{alg:cBMG2} correctly generates a $(T(u),t)$-minimal
  coloring at each inner vertex $u$ of $(T,t)$ provided the colorings at
  the two children $v$ and $w$ of $u$ are 
  $(T(v),t)$-minimal and $(T(w),t)$-minimal, respectively.  It is clear
  that the color sets $G(v)$ and $G(w)$ are disjoint while $u$ is
  processed. If follows immediately, therefore, that the coloring of $G(u)$
  is $(T(u),t)$-minimal if $t(u)=1$.  Thus, in the algorithm we can safely
  ignore this case.

  For $t(u)=0$, Alg.\ \ref{alg:cBMG2} determines the graph
  $G^*\in \{G(v),G(w)\}$ with the largest number of colors, say
  $\chi(G(v))=|\sigma(G(v))|\le|\sigma(G(w))|=\chi(G(w))$. Since
  $\chi(G(u))=\chi(G(w))$ we can color $G(u)$ with the color set
  $S=\sigma(G(w))$ of $G(w)$. To this end, we recolor $G(v)$ with an
  injective map $\phi:\sigma(G(v)) \rightarrow S$. Such a map exists since
  $|\sigma(G(v))|=\chi(G(v))\le|S|=\chi(G(w))$.  After recoloring
  $|\sigma(G(v))|=|S|=\chi(G(w))=\chi(G(u))$. Thus, the resulting coloring
  of $G(u)$ is again $(T(u),t)$-minimal.  Since each leave $v$ is trivially
  $(T(v),t)$-minimally colored, we conclude that Alg.\ \ref{alg:cBMG2} is
  correct and can clearly be implemented to run in polynomial-time.
\end{proof}

\begin{corollary}
  For every cograph $G$ and every cotree $(T,t)$ there is a $(T,t)$-minimal
  coloring.
\end{corollary}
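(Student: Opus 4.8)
The plan is to reduce the general (not necessarily binary) case to the binary case already settled by Theorem~\ref{Thm:Algo1}. First I would take the given cotree $(T,t)$ of $G$ and refine it into a \emph{binary} cotree $(T',t')$ by the construction recalled earlier in this section: each inner vertex $u$ of $T$ with children $u_1,\dots,u_k$ is replaced by an arbitrary rooted binary tree whose leaves are $u_1,\dots,u_k$, whose root is $u$, and all of whose newly introduced inner vertices carry the common label $t(u)$. Since both $\bigcupdot$ and $\bigjoin$ are associative, $(T',t')$ is again a cotree for the same cograph $G$.

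The key observation is that this refinement leaves the family of induced subgraphs attached to the \emph{original} vertices of $T$ unchanged. Indeed, the root of the binary tree inserted in place of $u$ realizes exactly $\bigjoin_i G(u_i)$ (if $t(u)=1$) or $\bigcupdot_i G(u_i)$ (if $t(u)=0$), which equals $G(u)$; hence for every vertex $u$ of $T$ there is a vertex $u'$ of $T'$ with $G(u')=G(u)$. In other words, the node set of $T$ embeds into the node set of $T'$ in a way that respects the associated induced subgraphs.

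Next I would invoke Theorem~\ref{Thm:Algo1} to obtain a $(T',t')$-minimal coloring $\sigma$ of $G$, i.e.\ a coloring satisfying $|\sigma(G(w))|=\chi(G(w))$ for \emph{every} vertex $w$ of $T'$. Specializing this identity to the vertices $u'$ arising from the embedding of $T$ and using $G(u')=G(u)$ gives $|\sigma(G(u))|=\chi(G(u))$ for every vertex $u$ of $T$. By the definition of $(T,t)$-minimality this says precisely that $\sigma$ is a $(T,t)$-minimal coloring of $G$, which is the claim. (Alternatively, one could reach the same binary coloring via Corollary~\ref{cor:allCog} together with Theorem~\ref{thm:mrc}, but citing Theorem~\ref{Thm:Algo1} is the most direct route.)

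The only point demanding care—and where I expect the (modest) bulk of the work to lie—is the bookkeeping in the second paragraph: one must verify that refining a single inner vertex really does preserve $G(u)$ and yields a valid binary cotree with consistent labels. This is nothing more than the refinement/contraction correspondence between arbitrary and binary cotrees already described in the text, so no new machinery is needed; it remains only to note that the refinement may be performed simultaneously at all inner vertices of $T$ without interfering with one another.
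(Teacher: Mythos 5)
Your proof is correct and follows exactly the route the paper intends: the corollary is stated as an immediate consequence of Theorem~\ref{Thm:Algo1}, with the passage from an arbitrary cotree to a binary refinement being precisely the refinement/contraction correspondence the paper describes earlier (replacing each inner vertex and its children by a binary tree with identically labeled new inner vertices). Your added bookkeeping---that every original vertex $u$ of $T$ survives in $T'$ with $L(T'(u))=L(T(u))$, so $(T',t')$-minimality restricts to $(T,t)$-minimality---is the only detail the paper leaves implicit, and you verify it correctly.
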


The recursive structure of hc-cographs can also be used to count the number
of distinct hierarchical colorings of $G$ w.r.t.\ a given binary cotree
$(T,t)$. For an inner vertex $u$ of $T$ denote by $Z(G(u))$ the number of
hc-colorings of $G(u)$. If $u$ is a leaf, then $Z(u)=1$, otherwise, $u$ has
exactly two children, $\child(u)=\{v_1,v_2\}$.  For $t(u)=1$, we have
$Z(G(u))=Z(G(v_1)) \cdot Z(G(v_2))$ since the color sets are disjoint. If
$t(u)=0$, assume, w.l.o.g.\
$s_1:=|\sigma(G(v_1))|\le|\sigma(G(v_2))|=:s_2$,
$Z(G(u))=Z(G(v_1))\cdot Z(G(v_2)) \cdot g(s_1,s_2)$, where $g(s_1,s_2)$ is
the number of injections between a set of size $s_1$ into a set of size
$s_2$, i.e., $g(s_1,s_2)=\binom{s_2}{s_1} s_1!$. The number of coloring can
now be computed by bottom-up traversal on $T$.

The total number of hc-colorings can be obtained by considering a
caterpillar tree for the step-wise union of connected components. For each
connected component $G_i$ with $\chi(G_i)=s_i$, and $s=\max_i s_i$ there
are $\binom{s}{s_i}$ choices of the colors, i.e., $g(s,s_i)$ injections and
thus $Z(G)=\prod_i (g(s,s_i)\cdot Z(G_i))$ colorings. We note in passing
that the chromatic polynomial of a cograph, and thus the number of
colorings using the minimal number of colors, can also be computed in
polynomial time \cite{Makowsky:06}. There does not seem to be an obvious
connection between the hierarchical colorings and the chromatic polynomial
of a cograph, however.

\section{Modularly-Minimal Colorings}

The definition of hierarchical colorings in the previous section crucially
depends on the structure of cographs and their associated cotrees.  In
order to extend the concept to arbitrary graphs, we first need some
additional notation. We denote the neighborhood of a vertex $v\in V$ by
$N(v)$ and recall
\begin{definition}[\cite{Gallai:67}] 
  Let $G=(V,E)$ be an arbitrary graph. A non-empty vertex set
  $X\subseteq V$ is a \emph{module} of $G$ if, for every
  $y\in V\setminus X$, either $N(y)\cap X=\emptyset$ or $X\subseteq N(y)$
  is true. A module $M$ is \emph{strong} if it is does not overlap with any
  other module $M'$, i.e., if $M\cap M'\in \{M,M',\emptyset\}$.
\label{def:module}
\end{definition}
In particular $V$ and the singletons $\{v\}$, $v\in V$ are strong modules.
The \emph{maximal modular partition} of a graph $G=(V,E)$ with $|V|>1$,
denoted by $\Pmax(G)=\{M_1,\dots,M_k\}$, is a partition of the vertex set
$V$ into inclusion-maximal strong modules distinct from $V$.  In
particular, if $G$ or $\overline{G}$ are disconnected, then the respective
connected components are the elements of $\Pmax(G)$.

The \emph{modular decomposition} \cite{Gallai:67} of a graph $G$ is based
on $\Pmax(G)$ and recursively decomposes $G$ into strong modules in
$\Pmax(G)$.  This recursive decomposition of $G$ corresponds to the modular
decomposition (MD) tree of $G$, that is, a vertex-labeled tree $(\T,\t)$
where each of its vertices is associated with a strong module $X$ of $G$
and a label $\t$ that distinguishes the three cases: (i) \emph{parallel}:
the induced subgraph of $G$ by $X$, $G[X]$, is disconnected, (ii)
\emph{series}: $\overline{G[X]}$ is disconnected, and (iii) \emph{prime}:
both $G[X]$ and $\overline{G[X]}$ are connected. We write $\mathcal{M}(G)$
for the set of strong modules of $G$.  The maximal modular partition
$\Pmax(G)$, the MD tree, and the set of strong modules $\mathcal{M}(G)$ of
$G$ are unique \cite{HP:10}. The modular decomposition of $G$, and thus its
set of strong modules, can be obtained in linear time \cite{McConnell:99}.

We first note for later reference that all proper colorings necessarily
satisfy a generalization of \AX{(K2)} for series nodes of the MD tree. By
abuse of notation, we will call a node $u$ of the MD tree $(\T,\t)$
parallel, series, or prime if the corresponding vertex set $L(\T(u))$ is a
parallel, series or prime module of $G$.
\begin{lemma}
  Let $\sigma$ be a proper coloring of a graph $G$ and let $X$ be a strong
  series module of $G$ with $\Pmax(G[X])=\{M_1,\dots,M_k\}$. Then
  $\sigma(M_i)\cap\sigma(M_j)=\emptyset$ whenever $i\ne j$.
  \label{lem:K2gen}
\end{lemma}
\begin{proof}
  Since the $M_i$ are strong modules of $X$, each $x\in M_i$ is adjacent to
  every $y\in X\setminus M_i$. Since $\sigma$ is a proper coloring, we have
  $\sigma(x)\ne\sigma(y)$, i.e., no color appearing in $M_i$ can appear
  elsewhere in $X$.
\end{proof}

A graph is a cograph if and only if all nodes in its MD tree are series or
parallel \cite{Corneil:81}. Since a cograph is either a $K_1$ or it can be
written as $G=G_1\cupdot G_2$ or $G=G_1\join G_2$, both $G_1$ and $G_2$ are
modules of $G$. It immediately follows that for every binary cotree
$(T,t)$, the vertex sets $L(T(u))=V(G(u))$ are modules of $G$ for all
vertices $u$ in $T$. In general, however, these modules are not strong.

\begin{lemma}
  \label{lem:allStrongInCotree}
  Let $(T,t)$ be a binary cotree of a cograph $G$.  Then, every strong
  module of $G$ is an induced subgraph $G(u)$ for some vertex $u$ in $T$.
\end{lemma}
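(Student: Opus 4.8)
The plan is to reduce the statement to the standard correspondence between strong modules and the vertices of the modular decomposition tree, and then to track how that tree sits inside the given binary cotree. First I would recall from modular decomposition theory that a vertex set $M$ is a strong module of $G$ precisely when $M=L(\T(u^*))$ for some vertex $u^*$ of the MD tree $(\T,\t)$; this is exactly the bijective correspondence between $\mathcal{M}(G)$ and the vertices of $(\T,\t)$ recalled above. Since $G$ is a cograph, all nodes of its MD tree are series or parallel, so the MD tree coincides with the discriminating cotree $(T^*,t^*)$. Hence every strong module $M$ of $G$ satisfies $M=L(T^*(u^*))$ for some vertex $u^*$ of $T^*$.

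It then suffices to show that every vertex $u^*$ of $T^*$ reappears in the binary cotree $T$ as a vertex $u$ with the same leaf set. Here I would invoke the explicit construction already recalled in the excerpt: $(T,t)$ is obtained from $(T^*,t^*)$ by replacing each inner vertex $u^*$ (with children $u_1^*,\dots,u_k^*$) by a binary tree $B_{u^*}$ with root $u^*$, leaves $u_1^*,\dots,u_k^*$, and all inner vertices labeled $t^*(u^*)$. This refinement reorganizes only the branching \emph{inside} each block $B_{u^*}$ and leaves unchanged the collection of original child-subtrees dominated by any $u^*$. A short induction on the height of $u^*$ (or a direct reading of the construction) then yields $L(T(u))=L(T^*(u^*))$, where $u$ is the image of $u^*$, i.e.\ the root of the block $B_{u^*}$. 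Combining the two steps gives $M=L(T^*(u^*))=L(T(u))=V(G(u))$, so that $G[M]=G(u)$, which is the claim.

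The main obstacle is precisely the bookkeeping in the second step: one must verify cleanly that passing from the discriminating cotree to an \emph{arbitrary} binary refinement preserves the leaf set below each original vertex, even though new inner vertices are inserted. Those newly introduced vertices of each block $B_{u^*}$ carry leaf sets that are unions of proper subsets of $\{L(T^*(u_1^*)),\dots,L(T^*(u_k^*))\}$; such unions are modules but in general not strong (any two sharing exactly one part overlap), which is consistent with the fact that only the original vertices of $T^*$ — equivalently, exactly the strong modules — are guaranteed to reappear as subtrees of $T$. This last observation is not needed for the proof itself, but it explains why the statement quantifies over strong modules and not over all $G(u)$.
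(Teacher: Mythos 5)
Your proof is correct, but it takes a genuinely different route from the paper's. The paper argues directly and by contradiction: given a strong module $M$, pick the inclusion-minimal vertex $v$ of $T$ with $M\subseteq V(G(v))$; if $M\neq V(G(v))$, then $M$ meets the leaf sets of both children $v_1,v_2$ of $v$ (it is contained in neither, by minimality), and since each $V(G(v_i))$ is a module of $G$ and $M$ is strong, $M$ cannot overlap either of them, forcing $V(G(v_1)),V(G(v_2))\subseteq M$ and hence $M=V(G(v))$, a contradiction. That argument is self-contained: it uses only the definition of a strong module together with the observation, made just before the lemma, that every $V(G(u))$ is a module of $G$. Your argument instead routes through two cited structural facts --- that the strong modules of $G$ are exactly the vertex sets of the MD tree, which for a cograph coincides with the discriminating cotree $(T^*,t^*)$, and that every binary cotree arises from $(T^*,t^*)$ by replacing each inner vertex by a binary block --- and then tracks, by induction on height, that each original vertex $u^*$ survives the refinement with its leaf set $L(T^*(u^*))$ intact. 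Both steps are legitimate (the paper recalls both facts with citations \cite{Corneil:81,Boecker:98}, and your bookkeeping induction goes through since a block's root dominates exactly the union of the leaf sets of its leaves), so your proof stands; what it costs is reliance on heavier imported machinery where the paper needs only the non-overlapping property of strong modules, and what it buys is an explicit picture of \emph{which} cotree vertices carry strong modules --- namely the images of $T^*$ --- together with your closing observation explaining why the newly inserted block vertices generally carry modules that are not strong.
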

\begin{proof}
  Let $M$ be a strong module of $G$ and assume, for contradiction, that
  $M\ne V(G(u))=L(T(u))$ for all $u$ in $T$. Then there is a vertex $v$ in
  $T$ such that $M\subsetneq V(G(v))$ and $V(G(v))$ is an inclusion-minimal
  set belonging to a node $v$ of $T$ containing $M$.  Let $v_1$ and $v_2$
  be the two children of $v$ in $T$. By construction, $M$ intersects both
  $V(G(v_1))$ and $V(G(v_2))$, and is properly contained in their
  union $V(G(v))$.  However, since $M$ is strong, it can neither overlap
  $V(G(v_1))$ nor $V(G(v_2))$ and thus, $V(G(v_1)), V(G(v_2))\subseteq M$.
  Therefore, $M= V(G(v_1))\cup V(G(v_2)) = V(G(v))$; a contradiction.  Thus,
  $M=V(G(u))$ for some vertex $u$ in $T$.
\end{proof}

The discriminating cotree $(T^*,t^*)$ of a cograph $G$ coincides with its
modular decomposition tree $(\T,\t)$ \cite{Corneil:81}. 
Together with Lemma
\ref{lem:K2gen}, this suggests to generalize the concept of hierarchical
colorings to arbitrary graphs.
\begin{definition}
  A coloring $\sigma$ of $G$ is \emph{hierarchical} if, for every
  disconnected strong module $G[X]$ of $G$ there is a strong module
  $M_j\in\Pmax(G[X])$ such that $\sigma(M_i)\subseteq\sigma(M_j)$ for all
  $M_i\in\Pmax(G[X])$.
  \label{def:genhier}
\end{definition}

Property \AX{(K3)}, furthermore suggests a stronger variant:
\begin{definition}
  A coloring $\sigma$ of $G$ is \emph{strictly hierarchical} if for every
  disconnected strong  module $G[X]$ of $G$ we have
  $\sigma(M_i)\cap \sigma(M_j)\in \{\sigma(M_i),\sigma(M_j)\}$ for all
  $M_i,M_j\in\Pmax(G[X])$.
  \label{def:genhierstrict}
\end{definition} 

If $\sigma$ is a strictly hierarchical coloring, then a module $M_j$ whose
color set has maximum size, satisfies $\sigma(M_i)\subseteq\sigma(M_j)$ for
all $i\in\{1,\dots,k\}$. Thus strictly hierarchical implies hierarchical.
The converse, however, is not always true, since
$\sigma(M_i)\subseteq\sigma(M_j)$ for all $M_i\in\Pmax(G[X])$ does not
prevent the modules distinct from $M_j$ from having overlapping color sets.

\begin{theorem}
  Every graph $G$ has a hierarchical and a strictly hierarchical
  coloring.
  \label{thm:strhier}
\end{theorem}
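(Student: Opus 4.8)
Since strictly hierarchical colorings are hierarchical, it suffices to exhibit for every graph $G$ a proper coloring that is strictly hierarchical; the plan is to build one recursively along the modular decomposition tree $(\T,\t)$, arguing by induction on $|V(G)|$. The base case $G=K_1$ is trivial: a single color is a proper coloring, and $K_1$ possesses no disconnected strong module, so the strict-hierarchy condition is vacuous. For $|V(G)|>1$ let $\Pmax(G)=\{M_1,\dots,M_k\}$ with $k\ge 2$. Since the $M_i$ partition $V(G)$, each satisfies $|M_i|<|V(G)|$, so the induction hypothesis provides a strictly hierarchical proper coloring $\sigma_i$ of $G[M_i]$.

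I combine the $\sigma_i$ into a coloring $\sigma$ of $G$ according to the label of the root of $(\T,\t)$, in each case by composing $\sigma_i$ with an injective recoloring $\phi_i$ on the color sets. If the root is \emph{parallel}, choose the $\phi_i$ so that each image $\phi_i(\sigma_i(M_i))$ equals the initial segment $\{1,\dots,|\sigma_i(M_i)|\}$; then the resulting color sets are nested and hence form a chain under inclusion. If the root is \emph{series} or \emph{prime}, instead choose the $\phi_i$ to have pairwise disjoint images. In either case set $\sigma|_{M_i}=\phi_i\circ\sigma_i$.

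Two claims then complete the argument. First, $\sigma$ is proper: inside each $M_i$ it agrees with the proper coloring $\sigma_i$ up to the relabeling $\phi_i$, while across distinct $M_i,M_j$ one uses that between two elements of $\Pmax(G)$ either all or no edges are present. In the parallel case the $M_i$ are the connected components of $G$, so no edge joins distinct $M_i$ and no constraint arises; in the series and prime cases the color sets $\sigma(M_i)$ are pairwise disjoint by construction, which settles adjacent pairs. Second, $\sigma$ is strictly hierarchical. The disconnected strong modules of $G$ are $V(G)$ itself, precisely when the root is parallel, together with the disconnected strong modules of the $G[M_i]$; this rests on the standard fact that every strong module $Y\neq V(G)$ lies in a unique $M_i$ and is then a strong module of $G[M_i]$ with unchanged maximal modular partition. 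For $Y=V(G)$ in the parallel case the children $M_1,\dots,M_k$ have nested color sets by construction, so $\sigma(M_i)\cap\sigma(M_j)\in\{\sigma(M_i),\sigma(M_j)\}$ holds. For $Y\subsetneq M_i$ the restriction of $\sigma$ equals $\phi_i\circ\sigma_i$.

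The point that must be handled carefully — and which I expect to be the main obstacle — is this last transfer. Relabeling an entire subtree by an \emph{injective} color map $\phi_i$ preserves both inclusion and membership relations among the color sets of its submodules: for any color sets $A,B$ one has $A\subseteq B$ iff $\phi_i(A)\subseteq\phi_i(B)$, and $A\cap B\in\{A,B\}$ iff $\phi_i(A)\cap\phi_i(B)\in\{\phi_i(A),\phi_i(B)\}$. Consequently the strict-hierarchy property that $\sigma_i$ satisfies at every disconnected strong module $Y\subsetneq M_i$ is inherited by $\sigma$, and is not destroyed when the color sets are nested (parallel) or separated (series/prime) at the current root. Combining the two claims shows $\sigma$ is a strictly hierarchical proper coloring of $G$, which also yields the hierarchical coloring and completes the induction.
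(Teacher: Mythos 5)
Your proof is correct and follows essentially the same route as the paper's: induction on $|V|$ over the modular decomposition, combining strictly hierarchical colorings of the maximal strong modules $M_i\in\Pmax(G)$ with pairwise disjoint color sets at series/prime roots and nested initial-segment color sets at parallel roots. The only difference is one of explicitness: where the paper invokes ``w.l.o.g.'' to renormalize the colorings $\sigma_i$, you spell out the injective relabelings $\phi_i$ and verify that they preserve the inclusion and membership relations among color sets, which is a welcome tightening rather than a departure.
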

\begin{proof}
  Since strictly hierarchical implies hierarchical, it suffices to show
  that $G=(V,E)$ has a strictly hierarchical coloring. We proceed with
  induction on $|V|$.  Clearly, the single vertex graph $K_1$ has a
  strictly hierarchical coloring.  Now suppose that every graph with
  less than $|V|$ vertices has a strictly hierarchical coloring.  Let
  $\Pmax(V)=\{M_1,\dots,M_k\}$.  By induction hypothesis, each $G[M_i]$ has
  a strictly hierarchical coloring $\sigma_i$.
		
  If $V$ is a series or prime module, then the colorings
  $\sigma_1,\dots, \sigma_k$ can be chosen w.l.o.g. to use pairwise
  disjoint color sets and thus, easily extend to a proper coloring $\sigma$
  of $G$.  Due to the hierarchical structure of strong modules, every
  strong module of $G$ must be contained in one of the $M_i$,
  $1\leq i\leq k$.  Since, for every $G[M_i]$, the coloring $\sigma$
  restricted to $G[M_i]$ is a strictly hierarchical coloring and since
  Def.\ \ref{def:genhierstrict} imposes no further conditions on prime and
  series modules, the graph $G$ has a strictly hierarchical coloring.
 
  Otherwise, if $V$ is a parallel module, then every $\sigma_i$ can be
  chosen, w.l.o.g., to use only color sets $\{1,\dots,s_i\}$ with
  $s_i=|\sigma_i(M_i)|$.  Since $V$ is a parallel module, there are no
  edges between distinct $M_i$ and $M_j$. Hence, the colorings
  $\sigma_1,\dots, \sigma_k$ easily extend to a proper coloring $\sigma$ of
  $G$.  Furthermore, it obviously satisfies
  $\sigma(M_i)\subseteq \sigma(M_j)$ if and only if $s_i\le s_j$, and thus
  $\sigma(M_i)\cap\sigma(M_j)\in\{\sigma(M_i),\sigma(M_j)\}$.  This and the
  fact that $\sigma$ restricted to each $G[M_i]$ is a strictly hierarchical
  coloring, implies that $\sigma$ is a strictly hierarchical coloring of
  $G$.
\end{proof}

We next show that the (strictly) hierarchical colorings are a direct
generalization of the hierarchical colorings of cographs.
\begin{lemma}
  Let $G$ be a cograph and $\sigma$ a coloring of $G$. Then $\sigma$ is
  hierarchical if and only if it is hierarchical w.r.t.\ some binary cotree
  $(T,t)$ of $G$. Moreover, $\sigma$ is strictly hierarchical if and only
  if it is hierarchical w.r.t.\ all cotrees of $G$.
  \label{lem:wotree}
\end{lemma}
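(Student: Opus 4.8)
The plan is to reduce both equivalences to a purely set-theoretic statement about the color sets of the children of the \emph{parallel} nodes, exploiting that for a cograph the discriminating cotree coincides with the modular decomposition tree, so that every binary cotree $(T,t)$ is obtained by binarizing the MD-tree nodes, all of which are series or parallel. Recall that in the hc-construction governed by $(T,t)$, an inner node $u$ with children $v_1,v_2$ imposes \AX{(K2)} if $t(u)=1$ (join/series: disjoint color sets) and \AX{(K3)} if $t(u)=0$ (union/parallel: nested color sets). Since $\sigma$ is a proper coloring, a join node satisfies \AX{(K2)} automatically (cf.\ Lemma~\ref{lem:K2gen}); hence $\sigma$ is hierarchical w.r.t.\ $(T,t)$ iff \AX{(K3)} holds at every parallel inner node of $T$. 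Writing $S_i\coloneqq\sigma(M_i)$ for $\Pmax(G[X])=\{M_1,\dots,M_k\}$ at a disconnected strong module $X$, the lemma thus comes down to comparing, for the family $S_1,\dots,S_k$, the existence of a binary refinement of $X$ satisfying \AX{(K3)} at each inner node with the requirement that \emph{all} such refinements do.

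For the first equivalence I would isolate the set-theoretic core: a binary tree on leaves $S_1,\dots,S_k$ (inner labels $0$, each inner node carrying the union of the $S_i$ below it) satisfies \AX{(K3)} everywhere iff $\{S_1,\dots,S_k\}$ has an inclusion-maximum. For the direction ``some cotree $\Rightarrow$ hierarchical'', start at the node for $X$ in the given cotree and repeatedly descend into the child of larger color set; since the two children at each step are nested, the accumulated color set is preserved, so this path ends in a leaf $M_j$ with $\sigma(M_j)=\bigcup_i S_i\supseteq S_i$ for all $i$, which is exactly Def.~\ref{def:genhier}. For the converse, given the maximum $M_j$ from Def.~\ref{def:genhier}, I binarize each parallel node as a caterpillar in which $M_j$ sits at the deepest position, so that the accumulated color set equals $S_j$ throughout and each newly attached child satisfies $S_i\subseteq S_j$, giving \AX{(K3)}; series nodes are binarized arbitrarily, where \AX{(K2)} holds by properness. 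These per-node binarizations combine into a single binary cotree because each inner node of the result lies inside exactly one binarization and its \AX{(K2)}/\AX{(K3)} condition involves only the fixed color sets of its two children.

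For the second equivalence the analogous core statement is that \emph{every} binary refinement of $X$ satisfies \AX{(K3)} iff $\{S_1,\dots,S_k\}$ is a chain under inclusion. The direction ``strictly hierarchical $\Rightarrow$ all cotrees'' uses the observation that in any binarization the two children of an inner node carry color sets of the form $\bigcup_{i\in I}S_i$ and $\bigcup_{i\in J}S_i$; when $\{S_i\}$ is a chain each such union equals its largest member and any two members are comparable, so the children are nested and \AX{(K3)} holds. The converse is proved by contraposition: if some pair $S_i,S_j$ is incomparable, I build one binary cotree in which $M_i$ and $M_j$ are siblings at the deepest inner node of the binarization of $X$; that node then violates \AX{(K3)}, so $\sigma$ fails to be hierarchical w.r.t.\ this cotree.

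The main obstacle I anticipate is bookkeeping rather than a deep idea. One must argue cleanly that the inner nodes of an \emph{arbitrary} binary cotree are exactly the roots of the per-node binarizations together with the auxiliary nodes created inside them, so that a node-by-node check of \AX{(K2)}/\AX{(K3)} suffices; and one must nail the small but essential fact that the union of a chain of color sets equals its maximum, which is what makes \emph{arbitrary} binarizations admissible in the strict case. Establishing the correspondence between Def.~\ref{def:genhier}/\ref{def:genhierstrict} and the cotree conditions, and using that every binary cotree binarizes the modular decomposition tree (so the relevant parallel nodes are visible in $T$, cf.\ Lemma~\ref{lem:allStrongInCotree}), are the only places requiring care.
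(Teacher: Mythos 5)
Your proposal is correct and follows essentially the same route as the paper's proof: both reduce everything to the parallel nodes (since \AX{(K2)} is automatic for proper colorings at join nodes), use a caterpillar binarization with the inclusion-maximal module placed deepest for the ``hierarchical $\Rightarrow$ some cotree'' direction, argue via \AX{(K3)} that each inner node's color set equals that of one of the leaf modules below it for the converse, and realize an arbitrary pair of modules as siblings in some binary cotree to settle the strict case. The only differences are presentational: your explicit set-theoretic reformulation of the \AX{(K3)} conditions and your use of contraposition in the final direction, where the paper argues directly.
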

\begin{proof}
  Let $G$ be a cograph with modular decomposition tree $(\T,\t)$.  We will
  consider inner nodes $u$ in $\T$ whose children $u_i$ are given by
  $G(u_i)=G[M_i]$ for $\{M_1,\dots,M_k\}=\Pmax(G(u))$. We then replace
  every node $u$ of $(\T,\t)$ and its children $u_1,\dots,u_k$ by
  (specified) binary trees $(T_u,t_u)$ with root $u$ and leaves
  $u_1,\dots,u_k$. If $u$ is series (resp.\ parallel) we put $t_{u}(v)=1$
  (resp.\ $t_{u}(v)=0$) for all inner vertices $v$ in $T_u$.  It is an easy
  exercise to verify that the resulting tree $(T,t)$ is indeed a cotree of
  $G$.

  Suppose that $\sigma$ is hierarchical.  If $u$ is a series node, the
  color sets are disjoint and we have $G(u)=\join_{i=1}^k G(u_i)$. Hence,
  for any binary tree $(T_u,t_u)$ its inner nodes represent joins and
  clearly satisfy \AX{(K2)}.  Next, consider a parallel node $u$ in
  $\T$. Hence, there is at least one child of $u$, say $u_1$, such that
  $\sigma(M_i)\subseteq \sigma(M_1)$ for all $1\le i \le k$. Thus we can
  use a caterpillar corresponding to the ``pairwise'' cograph structure
  $((\dots((G[M_1]\cupdot G[M_{i_2}])\cupdot G[M_{i_3}])\dots)\cupdot
  G[M_{i_k}])$ with $\{i_2,\dots i_k\} = \{2,\dots,k\}$ arbitrarily
  chosen. In the resulting tree, the root $u$ and all its newly constructed
  inner nodes are labeled $0$. Clearly this satisfies \AX{(K3)} in every
  step.  Taken together, these constructions turn $(\T,\t)$ into a binary
  cotree $(T,t)$ that such $G$ becomes an hc-cograph w.r.t.\ $(T,t)$, and
  thus $\sigma$ is hierarchical w.r.t.\ $(T,t)$.

  Suppose now that $\sigma$ is strictly hierarchical.  If $u$ is a series
  node, then we can use exactly the same arguments as above to conclude
  that, after replacing all series nodes $u$ by arbitrary binary trees
  $(T_u,t_u)$, the resulting tree satisfies \AX{(K2)}.

  Suppose now that $u$ is a parallel node and let $(T_u,t_u)$ be an
  arbitrary binary tree.  We first show that for every inner vertex $w$ of
  $T_u$ we have $\sigma(V(G(w)))=\sigma(M_j)$ for some
  $j\in \{1,\dots,k\}$.  So let $w$ be an  arbitrary vertex. Let
  $I\subseteq \{1,\dots,k\}$ be a maximal subset such that $w$ is an
  ancestor of $u_i$ for all $i\in I$.  Since $\sigma$ is strictly
  hierarchical and thus hierarchical, there is a $j\in I$ such that
  $\sigma(M_i)\subseteq \sigma(M_j)$ for all $i\in I$.  Note, $G(w) = G[M]$
  with $M=\cupdot_{i\in I}M_i$.  Taken the latter two arguments together,
  we can conclude that $\sigma(G(w)) = \sigma(M) = \sigma(M_j)$.  The
  latter is, in particular, also true for the children $w_1$ and $w_2$ of
  $w$, i.e., $\sigma(G(w_1))=\sigma(M_{i_1})$ and
  $\sigma(G(w_2))=\sigma(M_{i_2})$ for some $i_1,i_2\in I$.  This and the
  fact that $\sigma$ is strictly hierarchical implies that
  $\sigma(G(w_1))\cap \sigma(G(w_2))\in \{\sigma(G(w_1)),\sigma(G(w_2))\}$
  Note, $w$ corresponds to $G(w) = G(w_1)\cupdot G(w_2)$.  Taken the latter
  two arguments together, implies that \AX{(K3)} holds for every inner
  vertex of $(T_u,t_u)$.  

  In summary, we can therefore replace $(\T,\t)$ by an arbitrary binary
  tree $(T,t)$ that displays $(\T,\t)$. Thus $\sigma$ is a hierarchical
  coloring of $G$ w.r.t.\ every cotree $(T,t)$ of $G$.

  Now consider the reverse implications.  The modular decomposition tree
  $(\T,\t)$ is obtained from any cotree $(T,t)$ of $G$ by stepwise
  contraction of all non-discriminating edges $uv$ i.e., those with
  $t(u)=t(v)$, into a new vertex $w_{uv}$ with label $\t(w_{uv})=t(u)$
  (cf.\ \cite{Boecker:98}).  The vertices of $\T$ are exactly the strong
  modules of $G$.  Again, we consider inner nodes $u$ in $\T$ whose
  children $u_i$ are given by $G(u_i)=G[M_i]$ for
  $\{M_1,\dots,M_k\}=\Pmax(G(u))$.  By construction, every inner vertex $v$
  on the path between $u_i$ and $u$ in $T$ is labeled $t(v)=t(u)$.  Since
  the definition of (strictly) hierarchical coloring does not impose
  constraints on series nodes $u$ (in which case $G(u)$ is connected),
  there is nothing to show for the case $t(u)=1$.  Hence, suppose that
  $t(u)=0$.

  Suppose now that $\sigma$ is a hierarchical coloring w.r.t.\ some binary
  cotree $(T,t)$. Let $v$ be such an inner vertex on the path between $u_i$
  and $u$. Clearly, $\sigma(G(v))$ is the union of the color set of some of
  the modules $M_1,\dots,M_k$.  This and the fact that \AX{(K3)} is
  satisfied for every vertex in $T$ implies $\sigma(G(v))=\sigma(G(u_j))$
  for some successor $u_j$ of $v$. Since the latter is, in particular, true
  for $u$, we have $\sigma(G(u))=\sigma(G(u_j)) = \sigma(M_j)$ for some
  $j\in \{1,\dots,k\}$. This and $\sigma(M_i)\subseteq \sigma(G(u))$
  implies that $\sigma$ is hierarchical.

  Finally, suppose $\sigma$ is a hierarchical coloring w.r.t.\ every binary
  cotree $(T,t)$, that is the disjoint union of the $G(u_i)$ to obtain
  $G(u)$ is constructed in an arbitrary order. In particular, therefore,
  for every pair $u_i\ne u_j$ of children of $u$ in $\T$ there is a binary
  cotree in which $u_i$ and $u_j$ have a common parent. By \AX{(K3)},
  therefore, we have $\sigma(M_i)\subseteq \sigma(M_j)$ or
  $\sigma(M_j)\subseteq \sigma(M_i)$ for all $1\le i< j\le k$, and thus
  $\sigma$ is strictly hierarchical.
\end{proof}

An an immediate consequence, we can restate Thm.~\ref{thm:greedy} in the
following form:
\begin{corollary}
  Let $G$ be a cograph. Then, $\sigma$ is a greedy coloring if and only if
  it is strictly hierarchical.
\end{corollary}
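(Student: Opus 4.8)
The plan is to obtain the statement by simply chaining together two results already established in this section, exploiting the fact that both are stated as biconditionals sharing a common intermediate property, namely ``$\sigma$ is hierarchical with respect to every binary cotree of $G$.'' No new combinatorial argument should be needed; the corollary is a composition of equivalences.

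Concretely, I would first invoke Theorem~\ref{thm:greedy}, which asserts that a coloring $\sigma$ of the cograph $G$ is a greedy coloring if and only if $\sigma$ is hierarchical with respect to \emph{every} binary cotree $(T,t)$ of $G$. Next I would invoke the second assertion of Lemma~\ref{lem:wotree}, which states that $\sigma$ is strictly hierarchical if and only if $\sigma$ is hierarchical with respect to \emph{all} cotrees of $G$. Since, by Def.~\ref{def:hc-coloring}, the notion of a hierarchical coloring is defined solely relative to binary cotrees, the families of trees quantified over in the two statements coincide, so the right-hand side of the first biconditional is verbatim the right-hand side of the second. Transitivity of ``if and only if'' then yields directly that $\sigma$ is a greedy coloring if and only if it is strictly hierarchical, completing the proof.

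The only point that warrants a word of care is confirming that the two biconditionals genuinely share the same middle term: Theorem~\ref{thm:greedy} phrases its condition as ``every binary cotree,'' whereas Lemma~\ref{lem:wotree} speaks of ``all cotrees.'' Because hierarchical colorings are defined only relative to binary cotrees, ``all cotrees'' must be read as ``all binary cotrees,'' so the two right-hand sides are identical and no gap arises. I do not anticipate any genuine obstacle here; the statement is an immediate corollary once the matching of quantifiers is observed.
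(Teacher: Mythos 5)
Your proposal is correct and matches the paper's own (implicit) argument: the corollary is stated there as an ``immediate consequence'' obtained by chaining Theorem~\ref{thm:greedy} with the second assertion of Lemma~\ref{lem:wotree}, exactly as you do. Your remark on reconciling ``every binary cotree'' with ``all cotrees'' is also sound, since hierarchical colorings are only defined relative to binary cotrees and the proof of Lemma~\ref{lem:wotree} itself uses that reading.
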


Definitions \ref{def:genhier} and \ref{def:genhierstrict} impose no
conditions on the prime nodes in the MD tree. Motivated by the equivalence
of $(T,t)$-minimal and hierarchical colorings of cographs, it seems natural
to consider colorings in which all strong modules are minimally colored.

\begin{definition}
  A coloring $\sigma$ of $G$ is \emph{modularly-minimal} if it satisfies
  $|\sigma(G[X])|=\chi(G[X])$ for all $X\in\mathcal{M}(G)$.
\end{definition}
Clearly, not every $\chi(G)$-coloring is also modularly-minimal.  As an
example, consider the disconnected graph $G = K_3 \cupdot P_3$ whose
components (and thus strong modules) are isomorphic to a $K_3$ and an
induced path $P_3$ on three vertices. Clearly, $\chi(G)=3$. A 3-coloring
$\sigma$ of $G$ that uses all three colors for the $P_3$ is still minimal,
but not modularly-minimal since $\chi(P_3)=2$.

\begin{lemma}
  Every modularly-minimal coloring of a graph is hierarchical.
  \label{lem:modmin-hc}
\end{lemma}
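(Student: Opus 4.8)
The plan is to verify Definition~\ref{def:genhier} directly for an arbitrary disconnected strong module. Fix a strong module $X$ of $G$ with $G[X]$ disconnected and write $\Pmax(G[X])=\{M_1,\dots,M_k\}$. The first step is a structural observation: since $G[X]$ is disconnected, the members of $\Pmax(G[X])$ are precisely the connected components of $G[X]$, and since $X$ is a strong module of $G$ these $M_i$ are themselves strong modules of $G$ (they are the children of the node $X$ in the MD tree $(\T,\t)$, and every node of the MD tree is a strong module). Consequently the defining identity of a modularly-minimal coloring applies both to $X$ and to each $M_i$, giving $|\sigma(G[X])|=\chi(G[X])$ and $|\sigma(M_i)|=\chi(G[M_i])$ for all $i$.

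The second step is a short counting argument. Because $G[X]=\bigcupdot_{i=1}^{k} G[M_i]$ is the disjoint union of its components, Equation~\eqref{eq:rec1} yields $\chi(G[X])=\max_{i}\chi(G[M_i])$. Combining this with modular minimality gives
\[
  |\sigma(G[X])| \;=\; \chi(G[X]) \;=\; \max_{i}\chi(G[M_i]) \;=\; \max_{i}|\sigma(M_i)|.
\]
Now I would choose an index $j$ with $|\sigma(M_j)|=\max_i|\sigma(M_i)|$. Then $|\sigma(M_j)|=|\sigma(G[X])|$, while trivially $\sigma(M_j)\subseteq\sigma(G[X])$; since all color sets are finite, equal cardinality together with inclusion forces $\sigma(M_j)=\sigma(G[X])$. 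As $\sigma(G[X])=\bigcup_{i=1}^{k}\sigma(M_i)\supseteq\sigma(M_i)$ for every $i$, this yields $\sigma(M_i)\subseteq\sigma(M_j)$ for all $M_i\in\Pmax(G[X])$, which is exactly the condition required by Definition~\ref{def:genhier}. Since $X$ was an arbitrary disconnected strong module, $\sigma$ is then hierarchical.

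The argument is essentially a cardinality squeeze, so the only place that needs genuine care --- and the step I would treat as the main obstacle --- is the first one: justifying that each component $M_i$ of a strong parallel module $X$ is again a strong module of $G$, so that the modular-minimality identity may legitimately be invoked on it. This is where the nesting structure of the modular decomposition is used; once it is in place, the recursion~\eqref{eq:rec1} and the pigeonhole-style equality $\sigma(M_j)=\sigma(G[X])$ close the proof immediately.
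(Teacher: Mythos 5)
Your proof is correct and takes essentially the same route as the paper's: both identify the components of a disconnected strong module $X$ with $\Pmax(G[X])$, use $\chi(G[X])=\max_i\chi(G[M_i])$ together with modular minimality to get $|\sigma(M_j)|=|\sigma(G[X])|$ for a component of maximal chromatic number, and conclude $\sigma(M_j)=\sigma(G[X])\supseteq\sigma(M_i)$ by the cardinality squeeze. As a minor remark, the step you single out as the main obstacle (that each $M_i$ is a strong module of $G$) is not actually needed, since $\chi(G[M_j])\le|\sigma(M_j)|\le|\sigma(G[X])|=\chi(G[X])=\chi(G[M_j])$ already forces the equality without invoking modular minimality on the components themselves.
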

\begin{proof}
  For every graph $G$ with proper coloring $\sigma$ and connected
  components $G_i$ holds $\chi(G)=\max_i \chi(G_i)$. Hence, there is a
  connected component $G_j$ with $\chi(G)=\chi(G_j)$ and thus
  $\sigma(G)=\sigma(G_j)$.  Now assume that $\sigma$ is a modularly-minimal
  coloring of $G$.  Then for every parallel module $X$ with
  $\Pmax(G[X])=\{M_1,\dots,M_k\}$ holds
  $\chi(G[X])=\chi(G[M_j])=|\sigma(M_j)|$ and thus $\sigma(X)=\sigma(M_j)$
  for some $j\in\{1,\dots,k\}$. Therefore,
  $\sigma(M_i)\subseteq \sigma(M_j)$ for all $i\in\{1,\dots,k\}$, and thus,
  $\sigma$ is hierarchical.
\end{proof}

However, not every modularly-minimal coloring is strictly hierarchical.  As
an example, consider the cograph $G=K_4\cupdot K_2\cupdot K_2$. Its strong
modules are the singletons $\{v\}$, $v\in V$, the set $V$ and the vertex
sets of its connected components. We have $\chi(G)=\chi(K_4)=4$ and
$\chi(K_2)=2$. Consider a coloring $\sigma$ in which the two copies of
$K_2$ are colored $\{1,2\}$ and $\{3,4\}$ respectively. Clearly $\sigma$ is
modularly-minimal and hierarchical w.r.t.\ the binary cotree
$(K_4\cupdot K_2)\cupdot K_2$ but not w.r.t.\ the alternative binary cotree
$(K_2\cupdot K_2)\cupdot K_4$. In the latter case, (K3) is violated.

\begin{theorem}
  \label{thm:hc-modmin}
  Let $G$ be a cograph and $\sigma$ be a coloring of $G$.  The following
  statements are equivalent:
  \begin{enumerate}
  \item $\sigma$ is a hierarchical coloring w.r.t. some binary cotree
    $(T,t)$
  \item $\sigma$ is $(T,t)$-minimal
  \item $\sigma$ is modularly-minimal.
  \item $\sigma$ is hierarchical. 
  \end{enumerate}
\end{theorem}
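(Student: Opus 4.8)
The plan is to establish the cycle of implications $(1)\Rightarrow(2)\Rightarrow(3)\Rightarrow(4)\Rightarrow(1)$, drawing each arrow from a result already proved in this section. Three of the four arrows are essentially on record: $(1)\Rightarrow(2)$ is one direction of Theorem~\ref{thm:mrc}, $(3)\Rightarrow(4)$ is Lemma~\ref{lem:modmin-hc}, and $(4)\Rightarrow(1)$ is the forward direction of the first assertion of Lemma~\ref{lem:wotree}. Thus the only genuinely new step is to close the loop, for which I would prove $(2)\Rightarrow(3)$.

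For $(2)\Rightarrow(3)$, suppose $\sigma$ is $(T,t)$-minimal for the binary cotree $(T,t)$, so that $|\sigma(G(u))|=\chi(G(u))$ holds for every vertex $u$ of $T$. To verify modular-minimality I must check $|\sigma(G[X])|=\chi(G[X])$ for each strong module $X\in\mathcal{M}(G)$. The key is Lemma~\ref{lem:allStrongInCotree}: every strong module of a cograph appears as $G(u)=G[L(T(u))]$ for some vertex $u$ of the given binary cotree $(T,t)$. Fixing such a $u$ with $G[X]=G(u)$ and invoking $(T,t)$-minimality at $u$ immediately yields $|\sigma(G[X])|=|\sigma(G(u))|=\chi(G(u))=\chi(G[X])$, so $\sigma$ is modularly-minimal. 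Chaining this arrow with the three quoted above closes the cycle and establishes the mutual equivalence of all four statements.

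There is no real obstacle here, since all the heavy lifting was done in the preceding lemmas; the one point requiring care is the interplay of the quantifiers in statements~(1) and~(2). One must read them, consistently with Theorem~\ref{thm:mrc}, as referring to a \emph{common} cotree $(T,t)$, so that Lemma~\ref{lem:allStrongInCotree} can be applied to exactly the tree witnessing $(T,t)$-minimality in~(2). Once this is noted, the cycle is immediate.
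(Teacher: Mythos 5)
Your proposal is correct and takes essentially the same route as the paper: the paper also relies on Theorem~\ref{thm:mrc}, Lemma~\ref{lem:wotree}, Lemma~\ref{lem:modmin-hc}, and Lemma~\ref{lem:allStrongInCotree}, and its proof of $(1)\Rightarrow(3)$ is exactly your $(T,t)$-minimality-plus-strong-modules argument. The only difference is organizational --- you arrange the four statements in a single cycle $(1)\Rightarrow(2)\Rightarrow(3)\Rightarrow(4)\Rightarrow(1)$, while the paper quotes the two equivalences $(1)\Leftrightarrow(2)$ and $(1)\Leftrightarrow(4)$ and then adds $(1)\Rightarrow(3)$ and $(3)\Rightarrow(4)$ --- which is logically immaterial.
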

\begin{proof}
  The equivalence (1) and (2) is provided by Theorem \ref{thm:mrc}, the
  equivalence (1) and (4) is provided by Lemma \ref{lem:wotree}.  We show
  first that (1) implies (3).  Suppose that $\sigma$ is a hierarchical
  coloring w.r.t.\ the binary cotree $(T,t)$ of $G$.  By Lemma
  \ref{lem:allStrongInCotree}, all strong modules of $G$ belong to some
  vertex in $T$. By Thm.\ \ref{thm:mrc}, $\sigma$ is $(T,t)$-minimal.  The
  latter two arguments imply that $|\sigma(G[M])|=\chi(G[M])$ for all strong
  modules of $G$.  Hence, $\sigma$ is modularly-minimal.  Finally, (3)
  implies (4) by Lemma \ref{lem:modmin-hc}, which completes the proof.
\end{proof}

The tight connection between hierarchical and modularly-minimal colorings
for graphs prompts the question whether minimal colorings like (strictly)
hierarchical colorings can be constructed for all graphs.
\begin{theorem}\label{thm:allG-modmin}
  Every graph admits a modularly-minimal coloring.
\end{theorem}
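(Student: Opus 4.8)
The plan is to argue by induction on $|V|$, exploiting the observation that every edge of $G$ falls into exactly one of two categories: edges inside a maximal strong module, and edges joining two completely-adjacent maximal strong modules. Concretely, let $\Pmax(G)=\{M_1,\dots,M_k\}$ and recall that the strong modules of $G$ contained in a fixed $M_i$ are precisely the strong modules of the induced subgraph $G[M_i]$, so that $\mathcal{M}(G)=\{V\}\cup\bigcup_{i=1}^{k}\mathcal{M}(G[M_i])$. The base case $G=K_1$ is trivial, and by the induction hypothesis each $G[M_i]$ carries a modularly-minimal coloring $\sigma_i$, which we may assume uses exactly the color set $\{1,\dots,\chi(G[M_i])\}$.

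The key device---and the step that handles \emph{prime} nodes, where neither the $\max$ nor the $\sum$ recursion of Equ.~\eqref{eq:rec1} is available---is to read off suitable color sets from a \emph{global} optimal coloring. First I would fix an arbitrary proper coloring $\tau$ of $G$ with $\chi(G)$ colors and set $C_i\coloneqq\tau(M_i)$ for each $i$. Since $\tau$ restricted to $M_i$ is proper we have $|C_i|\ge\chi(G[M_i])$, and since any two distinct members of $\Pmax(G)$ are either completely adjacent or completely non-adjacent, completely-adjacent modules receive disjoint color sets $C_i\cap C_j=\emptyset$. Shrinking each $C_i$ to an arbitrary subset $C_i'\subseteq C_i$ of size exactly $\chi(G[M_i])$ preserves this disjointness while keeping $|\bigcup_i C_i'|\le|\bigcup_i C_i|=|\tau(V)|=\chi(G)$.

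I would then define $\sigma$ by recoloring each $G[M_i]$: replace $\sigma_i$ by $\phi_i\circ\sigma_i$ for a bijection $\phi_i\colon\{1,\dots,\chi(G[M_i])\}\to C_i'$. A bijective relabeling of colors leaves $|\sigma(X)|$ unchanged for every $X\subseteq M_i$, so the induction hypothesis yields $|\sigma(G[X])|=\chi(G[X])$ for all $X\in\mathcal{M}(G[M_i])$, and in particular $|\sigma(M_i)|=|C_i'|=\chi(G[M_i])$. The resulting $\sigma$ is proper, because every edge inside an $M_i$ is handled by $\sigma_i$, while every edge between completely-adjacent modules $M_i,M_j$ is safe thanks to $C_i'\cap C_j'=\emptyset$; as all edges of $G$ are of one of these two types, no monochromatic edge survives. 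Finally, since $\sigma$ is a proper coloring of $G$ we have $|\sigma(V)|\ge\chi(G)$, and since $\sigma(V)=\bigcup_i C_i'$ we also have $|\sigma(V)|\le\chi(G)$, whence $|\sigma(V)|=\chi(G)$. Together with the modularly-minimal colorings inside the $M_i$, this gives $|\sigma(G[X])|=\chi(G[X])$ for every $X\in\mathcal{M}(G)$, which is exactly what is required. The one point that must be argued with care is the structural identity $\mathcal{M}(G)=\{V\}\cup\bigcup_i\mathcal{M}(G[M_i])$, i.e.\ that the MD tree of $G[M_i]$ is the subtree of $(\T,\t)$ rooted at $M_i$; this is standard \cite{HP:10}, and it is precisely what lets a single uniform argument close the induction with no separate treatment of parallel, series, and prime nodes.
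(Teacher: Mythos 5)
Your proof is correct and follows essentially the same route as the paper: induction on $|V|$, taking a global $\chi(G)$-coloring, and recoloring each maximal strong module $M_i$ with its inductively-obtained modularly-minimal coloring using a subset of the colors the global coloring already placed on $M_i$, so that complete adjacency between modules guarantees properness and no new colors appear. Your write-up merely makes explicit (via the sets $C_i'$ and bijections $\phi_i$, and the two-sided bound on $|\sigma(V)|$) what the paper states more informally as ``reusing the colors in every $\sigma(G[M_i])$.''
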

\begin{proof}
  We proceed by induction on the number of vertices. Obviously, $K_1$ has a
  modularly-minimal coloring.  Let $G=(V,E)$ be a graph and suppose that
  every graph with less than $|V|$ vertices has a modularly-minimal
  coloring. Let $\Pmax(G) = \{M_1,\dots,M_k\}$ be the (unique) maximal
  modular partition of $G$. Let $\sigma$ be a $\chi(G)$-coloring of $G$.

  By induction hypothesis, every graph $G[M_i]$ has a modularly-minimal
  coloring. Since $\chi(G[M_i])\leq |\sigma(G[M_i])|$, we can reuse the
  colors in every $\sigma(G[M_i])$ to obtain a $\chi(G[M_i)$-coloring of
  $G[M_i]$ that is modularly-minimal for each $G[M_i]$.  This results in a
  new coloring $\sigma^*$ of $G$.
 
  We show that $\sigma^*$ is a proper coloring of $G$.  Let $uv$ be an edge
  of $G$.  If $u,v$ reside in the same module $M_i\in \Pmax(G)$ we have
  $\sigma^*(u)\neq \sigma^*(v)$ since every module of $\Pmax(G)$ is
  modularly-minimal colored.  If $u,v$ reside in the distinct modules
  $M_i, M_j\in \Pmax(G)$ then all vertices in $M_i$ are adjacent to all
  vertices $M_j$, which implies
  $\sigma(G[M_i])\cap \sigma([G[M_i]]) = \emptyset$.  Since the recoloring
  $\sigma^*$ uses only those colors in each modules that already appeared,
  we have $\sigma^*(G[M_i])\cap \sigma^*([G[M_i]]) = \emptyset$.  Hence,
  $\sigma^*(u)\neq \sigma^*(v)$, i.e., $\sigma^*$ is proper.  In
  particular, we have not introduced new colors and thus, $\sigma^*$
  remains a $\chi(G)$-coloring.
      
  Because of the hierarchical structure of strong modules, every
  strong module $X$ of $G$ distinct from $V$ is contained in $M_i$ for some
  $M_i\in\Pmax(G)$ and thus satisfies, by construction,
  $|\sigma^*(G[X])|=\chi(G[X])$. For the strong module $V$ we have
  $|\sigma^*(G[V])| = \chi(G)$. Thus $\sigma^*$ is a modularly-minimal
  coloring of $G$.
\end{proof}

\begin{corollary}
  Every graph admits a modularly-minimal strictly hierarchical coloring.
\end{corollary}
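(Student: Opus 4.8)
The plan is to run the same induction on $|V|$ used for Theorem~\ref{thm:allG-modmin} and Theorem~\ref{thm:strhier}, but with the strengthened induction hypothesis that each submodule receives a coloring that is \emph{simultaneously} modularly-minimal and strictly hierarchical. The base case $G=K_1$ is immediate. For the inductive step, let $\Pmax(G)=\{M_1,\dots,M_k\}$ and, by the induction hypothesis, fix for each $G[M_i]$ a coloring $\tau_i$ that is both modularly-minimal and strictly hierarchical. Since $\tau_i$ is modularly-minimal we have $|\tau_i(M_i)|=\chi(G[M_i])$, so we may assume each $\tau_i$ uses exactly the color set $\{1,\dots,\chi(G[M_i])\}$. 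Two observations guide the argument: by the laminar (hierarchical) structure of strong modules, every strong module $X\neq V$ is contained in some $M_i$, so after coloring each $G[M_i]$ correctly only the module $V$ itself needs attention; and strict hierarchy imposes a condition only at parallel (disconnected) nodes, whereas modular-minimality imposes one at every node. I would therefore split according to the label of $V$ in the MD tree.

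If $V$ is a \emph{series} module, then $G=G[V]$ is a join of the $G[M_i]$, so any proper coloring must use pairwise disjoint color sets on distinct $M_i$ and $\chi(G)=\sum_i\chi(G[M_i])$. I would relabel the $\tau_i$ to disjoint color sets and take their union $\sigma$; this is proper, satisfies $|\sigma(V)|=\sum_i\chi(G[M_i])=\chi(G)$, and the strict-hierarchy condition is vacuous at the series node $V$. If $V$ is a \emph{parallel} module, I would reuse the parallel construction of Theorem~\ref{thm:strhier}: keep each $\tau_i$ on the initial segment $\{1,\dots,\chi(G[M_i])\}$ and take the union $\sigma$. These color sets are nested, so \AX{(K3)} gives strict hierarchy at $V$; and since $G[V]$ is disconnected, $|\sigma(V)|=\max_i\chi(G[M_i])=\chi(G)$, giving modular-minimality at $V$. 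In both cases the restriction of $\sigma$ to each $M_i$ is merely an injective relabeling of $\tau_i$, and injective relabelings preserve all inclusion and intersection relations among color sets; hence modular-minimality and strict hierarchy inside each $M_i$ are retained, and by the first observation above every strong module other than $V$ is correctly colored.

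The main obstacle is the \emph{prime} case, because for a prime module one cannot impose disjoint color sets (as for series) or nested ones (as for parallel) without possibly exceeding $\chi(G)$. Here I would instead borrow the recoloring idea from the proof of Theorem~\ref{thm:allG-modmin}: start from an arbitrary $\chi(G)$-coloring $\sigma$ of $G$, and for each $M_i$ choose an injection $\phi_i:\{1,\dots,\chi(G[M_i])\}\to\sigma(M_i)$, which exists since $\chi(G[M_i])=|\tau_i(M_i)|\le|\sigma(M_i)|$, and recolor each $x\in M_i$ by $\sigma^*(x)=\phi_i(\tau_i(x))$. Because any two distinct modules of a prime $\Pmax(G)$ are either completely adjacent or completely non-adjacent, and because $\sigma^*$ reuses only colors already present in each $\sigma(M_i)$, properness is preserved across modules and no new color is introduced, so $|\sigma^*(V)|=\chi(G)$. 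Strict hierarchy is vacuous at the prime node $V$, and since each $\phi_i$ is injective it preserves both strict hierarchy and modular-minimality inside $M_i$.

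Finally, these three cases exhaust the possible labels of $V$ in the MD tree, and in each case $\sigma$ (resp.\ $\sigma^*$) is modularly-minimal at $V$, modularly-minimal and strictly hierarchical within every $M_i$, and satisfies the strict-hierarchy condition at $V$ whenever $V$ is parallel. Since every strong module of $G$ is either $V$ or lies inside some $M_i$, the coloring is modularly-minimal and strictly hierarchical on all of $\mathcal{M}(G)$, completing the induction. I expect no hidden difficulty beyond the prime case; the remaining steps are the routine verification that relabelings and color reuse preserve properness and the relevant set relations.
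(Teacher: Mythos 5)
Your proof is correct, but it takes a genuinely different route from the paper's. The paper treats the corollary as a post-processing step: it invokes Thm.~\ref{thm:allG-modmin} to get a modularly-minimal coloring $\sigma$, notes via Lemma~\ref{lem:modmin-hc} that $\sigma$ is already hierarchical, and then, at every parallel strong module $X$, relabels the colors of each $M_i\in\Pmax(G[X])$ onto the initial segment $1,\dots,\chi(G[M_i])$ of an arbitrary ordering of $\sigma(X)$; this nests the color sets, giving strict hierarchy while leaving all cardinalities $|\sigma(G[X])|$ unchanged. You instead re-run the induction behind Thm.~\ref{thm:allG-modmin} from scratch with a strengthened hypothesis (both properties at once), splitting on the type of the root of the MD tree: disjoint color sets at series nodes, nested initial segments at parallel nodes (as in Thm.~\ref{thm:strhier}), and the injective-recoloring trick from the proof of Thm.~\ref{thm:allG-modmin} at prime nodes, where you correctly use that sibling modules are completely adjacent or completely non-adjacent and that $\chi(G[M_i])\le|\sigma(M_i)|$. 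What the paper's route buys is brevity, since it reuses two established results plus one local relabeling; what your route buys is self-containment and robustness: because your construction is inherently bottom-up, the compatibility of recolorings across \emph{nested} parallel modules is automatic, whereas the paper's global ``recolor every parallel module'' step leaves the order of these relabelings implicit. Both arguments ultimately rest on the same two observations, which you state explicitly: injective relabelings preserve the cardinalities and the inclusion/intersection relations of color sets, and the strict-hierarchy condition of Def.~\ref{def:genhierstrict} constrains only the parallel (disconnected) strong modules, so series and prime nodes need only modular minimality.
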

\begin{proof}
  By Thm.\ \ref{thm:allG-modmin}, every graph $G$ admits a modularly-minimal
  coloring $\sigma$, that is, by Lemma \ref{lem:modmin-hc}, a hierarchical
  coloring of $G$. For every parallel module $X$ we define an arbitrary
  order on the color set $\sigma(X)$. As outlined in the proof of Lemma
  \ref{lem:modmin-hc} every module $M_i\in\Pmax(G[X])$ is colored with
  $\chi(G[M_i])$ colors of $\sigma(M_i)\subseteq\sigma(X)=\sigma(M_j)$ for
  some module $M_j\in \Pmax(G[X])$. Similar to the proof of
  Thm.~\ref{thm:strhier} we recolor each module $M_i$ with colors
  $1,\dots,\chi(M_i)$.  The resulting coloring $\sigma'$ is still
  modularly-minimal and satisfies $\sigma(M_i)\subseteq\sigma(M_j)$
  whenever $\chi(G[M_i])\le \chi(G[M_j])$. It follows that $\sigma'$ is
  strictly hierarchical.
\end{proof}

Modularly-minimal colorings provide a useful device to design efficient
coloring algorithms for certain hereditary graph classes. More precisely, a
polynomial-time coloring algorithm can be devised for every hereditary
graph class for which a minimal coloring can be constructed efficiently
given minimal colorings of its strong modules.  Such an algorithm is
outlined in Alg.\ \ref{alg:modmin} and used to show that so-called
$P_4$-sparse graphs can be modularly-minimal colored in polynomial time.

\begin{algorithm}[t]
  \caption{Modularly-minimal coloring a graph $G$ with MD tree $(\T,\t)$.}
  \label{alg:modmin}
  \algsetup{linenodelimiter=}
  \begin{algorithmic}[1]
    \REQUIRE Graph $G$ and MD tree $(\T,\t)$
    \STATE Initialize a coloring $\sigma$ s.t.\ all $v \in V(G)$
           have different colors
    \FORALL[from bottom to top where each $u$ is processed
               after all its children have been processed]{$u\in V^{0}(T)$}
       \IF {$u$ is parallel} 
          \STATE $\mathcal{G} \leftarrow \{G(w)\colon w\in\child(u)\}$ 
          \STATE $G^* \leftarrow \argmax_{w\in\child(v)} |\chi(G(w))|$
          \STATE $S \leftarrow \sigma(V(G^*))$ 
          \FOR {$H\in\mathcal{G}\setminus \{G^*\}$} 
             \STATE randomly choose an injective map $\phi:\sigma(H)\to S$
             \FORALL {$x\in H$}
                \STATE $\sigma(x)\leftarrow \phi(\sigma(x))$  
             \ENDFOR
          \ENDFOR
       \ELSIF{$u$ is \emph{prime}} 
          \STATE Construct a modularly-minimal coloring of $G(u)$
              with colors contained in $\sigma(G(u))$
              and adjust $\sigma$ accordingly \label{alg:prime-color}
       \ENDIF
    \ENDFOR
  \end{algorithmic}
\end{algorithm}

\begin{lemma}\label{lem:alg:modmin}
  Given a graph $G$ and corresponding MD tree $(\T,\t)$, Alg.\
  \ref{alg:modmin} returns a modularly-minimal coloring $\sigma$.
\end{lemma}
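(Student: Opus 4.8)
The plan is to prove, by bottom-up induction along the MD tree $(\T,\t)$, that the following invariant holds once a node $u$ has been processed: the restriction of $\sigma$ to $G(u)$ is a proper coloring that is modularly-minimal for $G(u)$, i.e.\ $|\sigma(G(w))|=\chi(G(w))$ for every $w$ in $\T(u)$; and, crucially, every color used on $L(\T(u))$ lies in $C_u$, the set of colors assigned to $L(\T(u))$ by the initialization in line~1. Since the strong modules of $G$ are exactly the vertex sets $L(\T(w))$ of the nodes $w$ of $\T$, and since $G(r)=G$ at the root $r$, the invariant at $r$ yields $|\sigma(G[X])|=\chi(G[X])$ for all $X\in\mathcal{M}(G)$, which is the assertion.

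Before the induction I would record the two consequences of the initialization that drive the whole argument. After line~1 the sets $C_u$ are pairwise disjoint for incomparable nodes, and \emph{every} recoloring step replaces colors on $L(\T(u))$ only by colors already present on $L(\T(u))$; hence the palette clause of the invariant is preserved, and for any inner node $u$ with children $u_1,\dots,u_k$ (so that $\Pmax(G(u))=\{M_1,\dots,M_k\}$ with $M_i=L(\T(u_i))$) the palettes $\sigma(M_1),\dots,\sigma(M_k)$ are pairwise disjoint at the instant $u$ is processed. This disjointness is the workhorse: together with the per-child properness it guarantees properness across module boundaries, and it supplies the color budget needed at prime nodes.

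For a leaf, $G(u)=K_1$ and the invariant is trivial. For an inner node $u$, assume the invariant for all $u_i$ and split on the label. If $u$ is \emph{series}, the algorithm does nothing; here $G(u)=\bigjoin_i G[M_i]$, and since the child palettes are disjoint the unchanged $\sigma$ stays proper on $G(u)$ (cf.\ Lemma~\ref{lem:K2gen}), while the join rule of Equ.~\eqref{eq:rec1} together with disjointness gives $|\sigma(G(u))|=\sum_i|\sigma(M_i)|=\sum_i\chi(G[M_i])=\chi(G(u))$, all sub-modules being untouched. If $u$ is \emph{parallel}, then $G(u)=\bigcupdot_i G[M_i]$ and $\chi(G(u))=\max_i\chi(G[M_i])$; the algorithm selects $G^*$ of maximum $\chi$ (equivalently, since each child is already modularly-minimal, of maximum palette size) and injectively maps every other child's palette into $S=\sigma(G^*)$. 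As there are no edges between distinct modules and each $\phi$ is injective, properness survives; injectivity moreover preserves the number of colors on every sub-module, so modular-minimality below each $u_i$ is retained; and $\sigma(G(u))=S$ forces $|\sigma(G(u))|=|S|=\chi(G(u))$.

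The prime case is the only place I expect real work, since no internal structure can be exploited and line~\ref{alg:prime-color} recolors $G(u)$ outright; I must verify it is well-posed. Before the step the child palettes are disjoint, so $|\sigma(G(u))|=\sum_i\chi(G[M_i])\ge\chi(G(u))$, the inequality holding because coloring the $G[M_i]$ with disjoint palettes is already a proper coloring of $G(u)$. By Theorem~\ref{thm:allG-modmin}, $G(u)$ admits a modularly-minimal coloring, and such a coloring uses exactly $\chi(G(u))$ colors on its root module $L(\T(u))$; relabeling these colors into the available palette $\sigma(G(u))$, which has at least $\chi(G(u))$ elements, realizes line~\ref{alg:prime-color} without leaving $C_u$. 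By the very definition of modularly-minimal, the resulting coloring satisfies $|\sigma(G[X])|=\chi(G[X])$ for every strong module $X$ of $G(u)$, i.e.\ for every node of $\T(u)$, re-establishing the full invariant at $u$ in one stroke. Propagating the invariant up to the root $r$ completes the proof; the subtle thread throughout is merely that always recoloring inside $C_u$ keeps sibling palettes disjoint, which is simultaneously what keeps $\sigma$ proper across series boundaries and what provides the color budget at prime nodes.
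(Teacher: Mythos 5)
Your proof is correct and follows essentially the same route as the paper's: a bottom-up induction over the MD tree in which series/parallel nodes are handled as in the cograph algorithm (Thm.~\ref{Thm:Algo1}), and the prime case is resolved by invoking Thm.~\ref{thm:allG-modmin} together with the disjointness of sibling palettes to guarantee a sufficient color budget. The only difference is presentational: you make explicit the palette-containment invariant and the chromatic-number bookkeeping that the paper leaves implicit by deferring to the earlier proof.
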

\begin{proof}
  We need to show that the Algorithm constructs a modularly-minimal
  coloring for every vertex $u$ of $T$. For the leaves this is trivial.  By
  analogous arguments as in the proof of Thm.\ \ref{Thm:Algo1}, we can
  conclude that the coloring of $G(u)$ is modularly-minimal provided
  that $u$ is a series or parallel node.  Moreover, if $G(u)$ is prime,
  then the entire subgraph $G(u)$ is modularly-minimally colored 
  with colors used for the subgraphs $G(u_i)$, $u_i\in \child(u)$.  
	A modularly-minimal  coloring exists by Thm.\ \ref{thm:allG-modmin}. In particular, since the
  color sets of $\sigma(G(u_i))$ and $\sigma(G(u_j))$ of any two distinct
  children $u_i,u_j\in \child(u)$ are disjoint while $u$ is processed,
  $\sigma(G(u))$ contains enough colors to obtain a $\chi(G(u))$ coloring,
  which completes the proof.
\end{proof}
It has to be noted, of course, that  constructing a modularly-minimal 
	coloring for prime nodes $u$ is a hard problem in
  general. However, \emph{if} it can be solved efficiently for each prime
  node, then Alg.\ \ref{alg:modmin} provides an efficient algorithm to
  construct a modularly-minimal coloring of $G$. Of course, this is
  trivially true for cographs since these lack prime nodes in the MD
  tree. The following example of $P_4$-sparse graphs shows that there are
  also interesting graph classes for which this is possible in a nontrivial
  manner.

A graph $G$ is $P_4$-sparse if each of its five-vertex induced
subgraphs contains at most one induced path on four vertices, a so-called $P_4$
\cite{Jamison:92}. They form a class of frequently studied
generalization of cographs. As shown in \cite{Jamison:92}, $G$ is
$P_4$-sparse if and only if every induced subgraph of $H$ of $G$ with at
least two vertices satisfies exactly one the following conditions: (i) $H$
is disconnected, (ii) $\overline{H}$ is disconnected, or (iii) $H$ is a
spider (defined below). In particular, therefore, every prime node in the
modular decomposition tree of $G$ is a spider, and its children except
$G[R]$ (unless $R=\emptyset$) are leaves corresponding to the vertices of
the body and the legs. In order to construct a modularly-minimal coloring
of a $P_4$-sparse graph, it therefore suffices to find a suitable way of
coloring spiders.

\begin{definition} \cite{Jamison:92,Nastos:12} A graph $G$ is a \emph{thin
    spider} if its vertex set can be partitioned into three sets $K$, $S$,
  and $R$ so that (i) $K$ is a clique; (ii) $S$ is a stable set; (iii)
  $|K|=|S|\ge 2$; (iv) every vertex in $R$ is adjacent to all vertices of
  $K$ and none of the vertices of $S$; and (v) each vertex in $K$ has a
  unique neighbor in $S$ and \emph{vice versa}.  \newline A graph $G$ is a
  \emph{thick spider} if its complement $\overline{G}$ is thin spider.
\end{definition}
The sets $K$, $S$, and $R$ are usually referred to as the \emph{body}, the
set of \emph{legs}, and \emph{head}, resp., of a thin spider. By definition
the head $R$ of a thin spider is a strong module in $G$, while all other
strong modules of a thin spider are trivial, that is, the vertices in
$K\cup S$ are all trivial modules (due to the 1-1 correspondence between
the vertices in $K$ and $S$ which precludes that any larger subset of $K$
or $S$ could be a module), see also \cite{Giakoumakis:97}. The same is true
for thick spiders since (strong) modules are preserved under
complementation.

It is not difficult to determine the chromatic number of a spider and to
construct a corresponding coloring:
\begin{lemma}
  If $G$ is a thin spider, $\chi(G)=\chi(R)+|K|$, if $G$ is thick spider,
  $\chi(G)=\chi(\overline{R})+|S|$, with $\chi(R)=0$ if the head $R$ is
  empty.
  \label{lem:spidercolor}
\end{lemma}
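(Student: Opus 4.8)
The plan is to treat the thin and thick cases separately, and in each to pin down $\chi(G)$ by sandwiching it between a lower bound forced by a clique and a matching upper bound given by an explicit coloring. Throughout, for a thick spider I take $\chi(\overline{R})$ to mean the chromatic number of the complement of the head of its defining thin spider $\overline{G}$; since $\overline{G}[R]=\overline{G[R]}$, this is just $\chi(G[R])$, the chromatic number of the head as induced inside $G$. With this reading both formulas assert the same thing, namely that $\chi(G)$ equals the size of the unique large clique plus the chromatic number of the head induced in $G$ (with the head contributing $0$ when $R=\emptyset$). I will not invoke Equ.~\eqref{eq:rec1}, since the head may be prime; instead I argue the ``join'' behaviour directly by a clique argument, which holds for arbitrary graphs.

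First I would do the thin spider. For the lower bound, let $\sigma$ be any proper coloring. Since $K$ is a clique, $\sigma$ assigns $|K|$ pairwise distinct colors to $K$; by property~(iv) every vertex of $R$ is adjacent to all of $K$, so none of these colors can reappear on $R$, giving $\sigma(K)\cap\sigma(R)=\emptyset$. As $\sigma$ restricted to $R$ is a proper coloring of $G[R]$, we have $|\sigma(R)|\ge\chi(G[R])$, whence $|\sigma(V)|\ge|K|+\chi(G[R])$; minimizing over $\sigma$ yields $\chi(G)\ge|K|+\chi(R)$. For the upper bound I would exhibit a coloring with exactly $|K|+\chi(R)$ colors: color $K$ with distinct colors $1,\dots,|K|$, color $R$ with a proper $\chi(R)$-coloring using fresh colors $|K|+1,\dots,|K|+\chi(R)$ (proper across all edges between $K$ and $R$ because the palettes are disjoint), and finally color $S$. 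Here property~(v) is the key: each $s\in S$ has a unique neighbor $k_s\in K$ and no other neighbor at all (it is non-adjacent to $R$ and $S$ is stable), so it suffices to give $s$ any color in $\{1,\dots,|K|\}\setminus\{\sigma(k_s)\}$, which is possible since $|K|\ge 2$. This shows $\chi(G)\le|K|+\chi(R)$, and hence equality.

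The thick spider is handled by the complementary argument. Passing to $\overline{G}$ exchanges the clique and stable roles, so in $G$ the legs $S$ form a clique of size $|S|$, the body $K$ is stable, every vertex of $R$ is adjacent to all of $S$ and to none of $K$, and each $k\in K$ is adjacent to all of $S$ except its matched partner $s_k$. The lower bound is then identical to before with $S$ in the role of the clique: $S$ forces $|S|$ distinct colors that must be disjoint from those on $R$, giving $\chi(G)\ge|S|+\chi(\overline{R})$. For the upper bound, color the clique $S$ with $1,\dots,|S|$ and $R$ with $\chi(\overline{R})$ fresh colors; the crucial step is that each $k\in K$ is non-adjacent, among $S$, only to $s_k$ (and it is non-adjacent to all of $R$, while $K$ is stable), so its single admissible color in $\{1,\dots,|S|\}$ is exactly $\sigma(s_k)$, which is legal precisely because $k$ and $s_k$ are non-adjacent. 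This reuses the palette and gives $\chi(G)\le|S|+\chi(\overline{R})$, hence equality. The degenerate case $R=\emptyset$ is subsumed by setting $\chi(R)=0$.

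I expect the only delicate point to be the bookkeeping around the matching encoded in properties~(iv)--(v): one must check that, after fixing the palette on the clique and the head, a legal color always remains for each leg (thin case) or body vertex (thick case), and that in the thick case reusing the partner's color creates no conflict. Both reduce to the single observation that a body/leg vertex of a spider has exactly one non-neighbor on the opposite side, combined with $|K|=|S|\ge 2$; once this is isolated, the remainder is routine.
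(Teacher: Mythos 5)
Your proof is correct and follows essentially the same route as the paper's: a clique-based lower bound (the body $K$, resp.\ the legs $S$, joined completely to the head forces disjoint color sets), matched by an explicit coloring in which each leg reuses a body color avoiding its unique matched neighbor (thin case, using $|K|\ge 2$), resp.\ each body vertex inherits the color of its matched leg (thick case). Your write-up is in fact somewhat more careful than the paper's, notably in spelling out that $\chi(\overline{R})$ for a thick spider $G$ means $\chi(G[R])$ and in verifying propriety of the recolored legs/body vertices explicitly, but the underlying argument is identical.
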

\begin{proof} 
  First, let $G$ be a thin spider with non-empty head $R$. Then
  $\chi(G)\ge |K|+\chi(R)$ since by definition every vertex of $K$ is
  connected with every vertex of $R$, i.e., the color sets of $R$ and $K$
  must be disjoint. Since the body $K$ is a clique, it requires
  $\chi(K)=|K|$ colors. Each leg $x\in S$ is connected to a unique vertex
  $x'\in K$ of the body. Since $|S|=|K|\ge 2$, one can always choose a
  color in $\sigma(K)$ different from $\sigma(x')$ to color $x$, hence
  $|K|+\chi(R)$ are sufficient to color $G$. If the head $R$ is empty, the
  $|K|$ colors of the body are sufficient by the same argument.

  Now suppose $G$ is a thick spider, i.e., $\overline{G}$ is a thin spider.
  Thinking of $K$, $S$, and $R$ as induced subgraphs of $G$, we note that
  $\overline{S}$ is a clique in $\overline{G}$, $\overline{K}$ is an
  independent set in $\overline{G}$, and every vertex of $\overline{R}$ is
  connected to every vertex of $\overline{S}$ and none of
  the vertices in $\overline{K}$. Thus
  $\chi(\overline{G})\ge |S|+\chi(\overline{R})$. Each vertex of
  $x\in\overline{K}$ is adjacent to all but one vertex in $\overline{S}$,
  which we call $x'$. Since there is a 1-1 correspondence between $x$ and
  $x'$, we can give them same color, i.e., $|S|+\chi(\overline{R})$ are
  sufficient. Again, if the head $R$ is empty, the $|S|$ colors of the legs
  are sufficient.
\end{proof}

\begin{corollary}
  A coloring of a $P_4$-sparse graph is modularly-minimal if each spider
  $(K,S,R)$ in its MD tree is colored such that $\sigma(S)=\sigma(K)$ and
  $\sigma$ is a modularly-minimal coloring of its head.
  \label{cor:p4s-col}
\end{corollary}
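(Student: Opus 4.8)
The plan is to view Corollary~\ref{cor:p4s-col} as the specialization of Algorithm~\ref{alg:modmin} to the class of $P_4$-sparse graphs, so that the only step requiring a genuine argument is the coloring of the prime nodes. Recall from the discussion preceding the statement that, for a $P_4$-sparse graph $G$, every prime node of the MD tree $(\T,\t)$ is a spider $(K,S,R)$ whose children, apart from the head $G[R]$, are the singleton leaves forming the body $K$ and the legs $S$. By Lemma~\ref{lem:alg:modmin}, Algorithm~\ref{alg:modmin} returns a modularly-minimal coloring as soon as its prime-node step (line~\ref{alg:prime-color}) is realized by a procedure that colors $G(u)$ modularly-minimally using only the colors already present on the $\sigma(G(u_i))$, $u_i\in\child(u)$; the series and parallel nodes are handled automatically exactly as in the proofs of Theorem~\ref{Thm:Algo1} and Lemma~\ref{lem:alg:modmin} (series nodes force disjoint color sets by Lemma~\ref{lem:K2gen} and sum up through Equation~\eqref{eq:rec1}, while parallel nodes reuse the largest child color set). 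It therefore suffices to show that the rule $\sigma(S)=\sigma(K)$ together with a modularly-minimal coloring of the head realizes line~\ref{alg:prime-color} for a spider.

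For a thin spider with head $R$, Lemma~\ref{lem:spidercolor} gives $\chi(G(u))=|K|+\chi(R)$. First I would color the clique $K$ with $|K|$ distinct colors and then assign to every leg $s\in S$ a color of $\sigma(K)$ different from the color of its unique neighbour $s'\in K$; since $|K|=|S|\ge 2$ this is always possible, and choosing the assignment so that every body color is used (e.g.\ a cyclic shift of the matching) yields $\sigma(S)=\sigma(K)$ without introducing a new color. Because every vertex of $R$ is adjacent to all of $K$, the modularly-minimal coloring of $R$ supplied by the already-processed child $G[R]$ uses $\chi(R)$ colors disjoint from $\sigma(K)$, so altogether $|\sigma(G(u))|=|K|+\chi(R)=\chi(G(u))$. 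For a thick spider the dual argument applies after passing to $\overline{G(u)}$: here $S$ is the clique, each body vertex is non-adjacent to exactly one leg, the equality $\sigma(K)=\sigma(S)$ is obtained by giving each body vertex the color of its non-neighbour, and Lemma~\ref{lem:spidercolor} yields $|\sigma(G(u))|=|S|+\chi(\overline R)=\chi(G(u))$.

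It then remains to check that \emph{all} strong modules contained in the spider are minimally colored. As noted after the spider definition, the head $R$ is the only nontrivial strong module of a spider, and it is modularly-minimally colored by hypothesis (equivalently, by the recursive construction); every other strong module inside $K\cup S$ is a singleton and hence trivially minimal. Together with the node-level equality $|\sigma(G(u))|=\chi(G(u))$ established above, and the automatic treatment of series and parallel nodes, a straightforward bottom-up induction on $(\T,\t)$ as in Lemma~\ref{lem:alg:modmin} shows that $|\sigma(G[X])|=\chi(G[X])$ for every $X\in\mathcal M(G)$, i.e.\ $\sigma$ is modularly-minimal.

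I expect the main obstacle to be the spider step itself, and specifically the verification that the prescription $\sigma(S)=\sigma(K)$ can always be met by a proper coloring that introduces no color beyond those on the body (resp.\ on the legs, in the thick case): this hinges on the perfect matching between $K$ and $S$ together with the bound $|K|=|S|\ge2$, and on treating the thick case by complementation so that Lemma~\ref{lem:spidercolor} applies verbatim. The remaining bookkeeping---that the head contributes a disjoint palette and that series and parallel nodes add no spurious colors---is routine given the earlier results.
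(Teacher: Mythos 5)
Your proposal is correct and follows essentially the same route as the paper: reduce to Algorithm~\ref{alg:modmin} via Lemma~\ref{lem:alg:modmin}, observe that the prime nodes of a $P_4$-sparse MD tree are spiders whose maximal modular partition consists of the head $R$ plus singletons, and realize the prime step by the coloring of Lemma~\ref{lem:spidercolor}. Your only additions are cosmetic refinements the paper leaves implicit, namely spelling out the thin and thick cases separately and noting that the leg coloring should be chosen surjective (e.g.\ a cyclic shift of the matching) so that $\sigma(S)=\sigma(K)$ holds as an equality rather than an inclusion.
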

\begin{proof}
  By definition, every prime vertex $u$ in the MD tree of a $P_4$-sparse
  graphs must be a spider $G(u) = (K,S,R)$. Therefore, by Lemma
  \ref{lem:alg:modmin} and the construction in Alg.\ \ref{alg:modmin}, it
  suffices to show that a $P_4$-sparse graph is modularly-minimal colored,
  if each spider $G(u) = (K,S,R)$ in its MD tree is modularly-minimal
  colored.  Note, $\Pmax(G(u)) = \cup_{v\in K\cup S}\{\{v\}\} \cup \{R\}$,
  that is, each child of $u$ corresponds to a vertex $v\in K\cup S$ and
  $R$.  By construction all color sets of the children of $u$ are pairwise
  disjoint. Using that $|K|=|S|$, we can obtain $\sigma(S)=\sigma(K)$
    by reusing $\sigma(K)$ to color $S$ as described in the proof of Lemma
    \ref{lem:spidercolor}.  Furthermore, $\sigma$ is a modularly-minimal
  coloring of the head $R$. As outlined in the proof of Lemma
  \ref{lem:spidercolor}, this yields a minimal coloring of the spider
  $(K,S,R)$. Since $\Pmax(G(u)) = \cup_{v\in K\cup S}\{\{v\}\} \cup \{R\}$,
  every strong module in the spider $G(u)$ is \emph{even}
  modularly-minimally colored, which completes the proof.
\end{proof}

\begin{corollary}
  A modularly-minimal coloring of a $P_4$-sparse graph $G=(V,E)$ can be
  computed in $O(|V|+|E|)$ time.
\end{corollary}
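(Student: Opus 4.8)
The plan is to show that Algorithm~\ref{alg:modmin}, specialized to $P_4$-sparse graphs, can be implemented to run in linear time, and that its output is already certified correct by the results established above. Correctness is immediate: by \cite{Jamison:92} every prime node of the MD tree of a $P_4$-sparse graph is a spider, so by Corollary~\ref{cor:p4s-col} it suffices to color each spider $(K,S,R)$ with $\sigma(S)=\sigma(K)$ and a modularly-minimal coloring on its head $R$; together with Lemma~\ref{lem:alg:modmin} this guarantees that the resulting coloring is modularly-minimal. What remains is the complexity analysis, and this is where the real work lies.

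First I would compute the modular decomposition tree $(\T,\t)$ of $G$, which takes $O(|V|+|E|)$ time \cite{McConnell:99}. Next, in a single bottom-up sweep, I would compute $\chi(G(u))$ for every node $u$ using $\chi=\max_i\chi(G(u_i))$ at parallel nodes, $\chi=\sum_i\chi(G(u_i))$ at series nodes, and the spider formula of Lemma~\ref{lem:spidercolor} at prime nodes. Since $(\T,\t)$ has $O(|V|)$ nodes and the work per node is proportional to its number of children, this sweep costs $O(|V|)$.

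The delicate point is the actual assignment of colors. A naive execution of the recoloring loop in Algorithm~\ref{alg:modmin} may recolor a vertex once for every parallel node on its path to the root, giving a worst-case cost of $\Theta(|V|^2)$. To avoid this, I would not materialize any intermediate coloring; instead I would perform a single top-down sweep that hands each node a color set, represented as an integer interval, and fixes the final color of every leaf exactly once. Concretely, a parallel node with color interval $C$ gives each child $u_i$ the prefix of $C$ of length $\chi(G(u_i))$, thereby reusing colors and realizing the inclusion $\sigma(M_i)\subseteq\sigma(M_j)$ of the largest child; a series node splits $C$ into consecutive blocks, one per child; and at a spider node the body $K$ receives $|K|$ colors, the legs $S$ reuse the body colors so that $\sigma(S)=\sigma(K)$ (a cyclic shift of the matching between $K$ and $S$ works, the thick case being handled dually via the complement, cf.\ Lemma~\ref{lem:spidercolor}), while the head $R$ receives the remaining $\chi(R)$ colors and is colored recursively. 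Each leaf is touched once, so this sweep is again $O(|V|)$.

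The main obstacle I anticipate is twofold. First, one must argue that recovering the spider data at every prime node---the partition $(K,S,R)$ together with the matching between $K$ and $S$---fits within the overall linear budget; this follows because the total size of all quotient graphs in the modular decomposition is $O(|V|+|E|)$, so the body/leg/head classification (e.g.\ by local degrees) and the matching can be read off across all prime nodes in linear time. Second, one must verify that deferring all recoloring to the single top-down interval-assignment pass reproduces exactly the modularly-minimal coloring certified by Lemma~\ref{lem:alg:modmin}: the interval prefixes at parallel nodes encode the injective recolorings $\phi$ of Algorithm~\ref{alg:modmin}, and the spider coloring is the one validated in Corollary~\ref{cor:p4s-col}. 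Combining the three sweeps then yields total running time $O(|V|+|E|)$.
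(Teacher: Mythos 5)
Your proposal is correct, and its correctness backbone (linear-time MD tree, Lemma~\ref{lem:alg:modmin} combined with Cor.~\ref{cor:p4s-col} and the spider coloring of Lemma~\ref{lem:spidercolor}) is the same as the paper's; where you genuinely diverge is in the complexity analysis. The paper simply runs Alg.~\ref{alg:modmin} as is, bottom-up, letting each node $u$ recolor within its module in $O(|V(G(u))|)$ time, and then caps the total cost by the bound $\sum_{M\in\mathcal{M}(G)}|M|\le 2|E|+3|V|$ of \cite[Thm.22]{McConnell:05}. So the ``naive execution'' you worry about is in fact already $O(|V|+|E|)$: the total recoloring work is at most the sum of the sizes of all strong modules, and your $\Theta(|V|^2)$ scenario can therefore only occur when $|E|=\Theta(|V|^2)$ (deep nesting of parallel modules forces many edges via the intervening series/prime nodes), in which case it is still linear in the input size. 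Your alternative---a bottom-up sweep computing $\chi(G(u))$ via max/sum/spider formulas, followed by a single top-down pass that hands each node a color interval and fixes every leaf's color exactly once---avoids any appeal to that module-size bound and makes the coloring phase $O(|V|)$ after the MD computation; the price is the extra bookkeeping you correctly identify (extracting $(K,S,R)$ and the $K$--$S$ matching from the quotient graphs, whose total size is indeed $O(|V|+|E|)$, and checking that the interval prefixes at parallel nodes realize valid injective recolorings $\phi$ so that Lemma~\ref{lem:alg:modmin} still certifies the output). Both routes are sound: the paper's is algorithmically lazier but leans on a nontrivial cited theorem; yours is self-contained, gives a sharper bound on the coloring phase, and is closer to what one would actually implement.
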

\begin{proof}
  The modular decomposition of $G$ can be obtained in $O(|V|+|E|)$ time
  \cite{McConnell:99}. Replace Line \ref{alg:prime-color} in Alg.\
  \ref{alg:modmin} by the construction as in Cor.\ \ref{cor:p4s-col} and
  consider a vertex $u$ in the MD tree.  If $u$ is series, there is nothing
  to do. If $u$ is parallel, the recoloring of the connected components can
  be performed in $O(|V(G(u))|)$ time. If $u$ is prime, i.e., $G(u)$ is
  spider, we only need to recolor its legs with the colors of the body as
  in the proof of Lemma \ref{lem:spidercolor}, which can also be done in
  $O(|V(G(u))|)$ time. Each node $u$ of the MD tree corresponds to strong
  module $M = V(G(u))\in \mathcal{M}(G)$ and thus can be handled in
  $O(|M|)$ time. By \cite[Thm.22]{McConnell:05} we have
  $O(\sum_{M\in\mathcal{M}(G)} |M|)\le 2|E|+3|V|$, and thus the total
  effort is $O(|V|+|E|)$, that is, linear in the size of $G$.
\end{proof}

\section{Concluding Remarks}

The existence of modularly-minimal colorings can serve as guiding principle
for recursive algorithms to compute the chromatic number. In particular,
whenever it is possible to efficiently compute the chromatic number of a
prime module $X$ from the quotient graph $G[X]/\Pmax(G[X])$ and chromatic
number of the modules $M\in\Pmax(G[X])$, one obtains an efficient algorithm
for this purpose. The virtue of Thm.\ \ref{thm:allG-modmin} in this context
is to relieve the need for controlling the colorings of the
modules. Examples of such a construction are the recursive computation of
the chromatic number for ($P_5$,gem)-free graphs \cite{Bodlaender:05} and
$P_4$-tidy graphs \cite{Giakoumakis:97b}. Here, we provided a further
algorithm to compute a minimal coloring of $P_4$-sparse graphs in
polynomial time. The approach may be useful more generally for vertex
colorings of graphs with forbidden subgraphs surveyed in
\cite{RS:04}. It is tempting to consider modularly-minimality as a
guiding principle for other optimization problems.

Moreover, the general idea of (strictly) hierarchical or modularly-minimal
coloring is not restricted to the modular decomposition. Many interesting
classes of graphs admit recursive constructions
\cite{Proskurowski:81,Noy:04}. For every graph class of this type,
one can ask whether minimal colorings can be constructed from minimal
colorings of the constituents, i.e., whether recursively minimal
colorings exist.

\subsection*{Acknowledgments} 

This work was support in part by the German Research Foundation (DFG, STA
850/49-1), the German Federal Ministry of Education and Research (BMBF,
project no.\ 031A538A, de.NBI-RBC), and the Mexican Consejo Nacional de
Ciencia y Tecnolog{\'i}a (CONACyT, 278966 FONCICYT2; scholarship CVU 901154).
\bibliographystyle{plain}
\bibliography{cographcoloring}
\end{document}